\let\@fnsymbol\@arabic \makeatother 
\titleformat{\section}[block]
	{\normalfont\large\bfseries\centering}
	{\thesection}{.5em}{}
\titleformat{\subsection}[runin]
	{\normalfont\bfseries}
	{\thesubsection}{.5em}{}
\titleformat{\subsubsection}[runin]
	{\normalfont\bfseries}
	{\thesubsubsection}{.5em}{}
\newtheorem{theorem}{Theorem}[section]
\newtheorem{lemma}[theorem]{Lemma}
\newtheorem{definition}[theorem]{Definition}
\newtheorem{conjecture}[theorem]{Conjecture}
\newtheorem{claim}[theorem]{Claim}
\newtheorem{proposition}[theorem]{Proposition}
\newtheorem{example}[theorem]{Example}
\newtheorem{problem}[theorem]{Problem}
\newcommand{\bx}{\textbf{x}}
\newcommand{\mS}{\mathcal{S}}
\newcommand{\mH}{\mathcal{H}}
\newcommand{\mF}{\mathcal{F}}
\newcommand{\mQ}{\mathcal{Q}}
\title{Shattering-extremal set systems from  Sperner families}
\date{}
\author{
	Christopher Kusch \thanks{Freie Universit\"at Berlin, Institut f\"ur Mathematik und Informatik, Arnimallee 3, 14195 Berlin, Germany and Berlin Mathematical School, Germany. E-mail: {\tt c.kusch@gmx.net}. Supported by Berlin Mathematical Phase II scholarship.} \and
	T\'amas M\'esz\'aros \thanks{Freie Universit\"at Berlin, Institut f\"ur Mathematik und Informatik, Arnimallee 3, 14195 Berlin, Germany and Berlin Mathematical School, Germany. E-mail: {\tt tamas.meszaros@fu-berlin.de}. Research is supported by the DRS POINT Postdoc Fellow program }
}
\begin{document}

\maketitle

\begin{abstract}
We say that a set system $\mathcal{F}\subseteq 2^{[n]}$ \emph{shatters} a given set $S\subseteq [n]$ if $2^S= \{F~\cap~S:~F~\in~\mathcal{F}\}$.
  The Sauer-Shelah lemma states that in general, a set system $\mathcal{F}$ shatters at least $|\mathcal{F}|$ sets. Here we concentrate on the case of equality. A set system is called \emph{shattering-extremal} if it shatters exactly $|\mathcal{F}|$ sets.  A conjecture of R\'onyai and the second author and of Litman and Moran states that if a family is shattering-extremal then one can add a set to it and the resulting family is still shattering-extremal. Here we prove this conjecture for a class of set systems defined from Sperner families. 
 \end{abstract}


\section{Introduction}

Let $n\in \mathbb{N}$ and set $[n]=\{1,...,n \}$. If $X\subseteq [n]$ and $I\subseteq [n]\setminus X$, we write $2^X$ to denote the power set of $X$, $I+2^X$ for the family $\{I\cup A\ :\ A\subseteq X\}$ and ${X\choose k}$ for the collection of subsets of $X$ of size $k$. A set system $\mathcal{F} \subseteq 2^{[n]}$ is a \emph{down-set} (\emph{up-set}) if $G\subseteq F$ and $F\in \mathcal{F}$ ($G\in \mathcal{F}$) implies $G\in \mathcal{F}$ ($F\in \mathcal{F}$). 
\begin{definition}
	A set system  \emph{shatters} a given set $S \subseteq[n]$ if $2^S = \{F \cap S\; : \; F \in \mathcal{F}  \}$. The family of subsets of $[n]$ shattered by $\mathcal{F}$ is denoted by $\text{Sh}(\mathcal{F})$. 
\end{definition}
We remark that the notion of shattering can also be stated in terms of the \emph{trace} of a set system. Given a set $S\subseteq [n]$, the \emph{trace} $\mF|_{S}$ of a set system $\mF$ on $S$ is defined as $\mF|_S = \{F \cap S \; : \; F \in \mF \}$. Then $S$ is shattered by $\mF$ precisely if $\mF|_S = 2^S$. This naturally leads to certain \emph{forbidden trace problems}, for which we refer the interested reader to the survey of F\"uredi and Pach~\cite{FP92}.

A natural first question is to ask how the size of a family $\mF$ relates to the size of the family it shatters. This question is answered from one side in the following fundamental result, which is usually referred to as the \emph{Sauer-Shelah lemma}.

\begin{proposition}\label{S-Sh-l}
$|\text{Sh}(\mathcal{F})|\geq |\mathcal{F}|$ for every set system
$\mathcal{F}\subseteq 2^{[n]}$.
\end{proposition}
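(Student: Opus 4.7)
My plan is to prove the Sauer–Shelah lemma by induction on $n$, using the classical argument of Pajor based on splitting $\mathcal{F}$ according to whether its members contain the last element. The base case $n=0$ is immediate, since then $\mathcal{F}$ is either empty (shattering no set) or equal to $\{\emptyset\}$ (shattering only $\emptyset$).

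For the inductive step, I would partition $\mathcal{F}\subseteq 2^{[n]}$ by introducing
\[
\mathcal{F}_0 = \{F\in\mathcal{F}\ :\ n\notin F\}
\qquad\text{and}\qquad
\mathcal{F}_1 = \{F\setminus\{n\}\ :\ F\in\mathcal{F},\ n\in F\},
\]
both viewed as subfamilies of $2^{[n-1]}$. The key observation is that $|\mathcal{F}| = |\mathcal{F}_0|+|\mathcal{F}_1| = |\mathcal{F}_0\cup\mathcal{F}_1| + |\mathcal{F}_0\cap\mathcal{F}_1|$, which converts the problem into a statement about two families on the smaller ground set $[n-1]$. Then I would apply the inductive hypothesis to $\mathcal{F}_0\cup\mathcal{F}_1$ and to $\mathcal{F}_0\cap\mathcal{F}_1$ separately.

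The main step is to verify two transfer properties that tie shattering on $[n-1]$ back to shattering by $\mathcal{F}$. First, any $S\subseteq[n-1]$ shattered by $\mathcal{F}_0\cup\mathcal{F}_1$ is automatically shattered by $\mathcal{F}$, since $\mathcal{F}_0\cup\mathcal{F}_1$ is exactly the trace $\mathcal{F}|_{[n-1]}$. Second, and this is the slightly subtler point, any $S\subseteq [n-1]$ shattered by $\mathcal{F}_0\cap\mathcal{F}_1$ gives rise to a shattered set $S\cup\{n\}$ of $\mathcal{F}$: every subset of $S$ can be realized both by a member of $\mathcal{F}$ containing $n$ and by one avoiding $n$, yielding all $2^{|S|+1}$ patterns on $S\cup\{n\}$.

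Combining these two observations, the families of shattered sets produced are disjoint (one consists of subsets of $[n-1]$, the other of sets containing $n$), so
\[
|\text{Sh}(\mathcal{F})| \;\geq\; |\text{Sh}(\mathcal{F}_0\cup\mathcal{F}_1)| + |\text{Sh}(\mathcal{F}_0\cap\mathcal{F}_1)| \;\geq\; |\mathcal{F}_0\cup\mathcal{F}_1| + |\mathcal{F}_0\cap\mathcal{F}_1| \;=\; |\mathcal{F}|,
\]
completing the induction. I expect the only mildly nontrivial step to be the second transfer property above; once that is in place, the inclusion–exclusion count $|\mathcal{F}_0|+|\mathcal{F}_1| = |\mathcal{F}_0\cup\mathcal{F}_1|+|\mathcal{F}_0\cap\mathcal{F}_1|$ does all the remaining work. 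An alternative route would be a compression argument: iteratively applying down-shifts $F\mapsto F\setminus\{i\}$ (when the result is not already in $\mathcal{F}$) preserves $|\mathcal{F}|$ and can only decrease $|\text{Sh}(\mathcal{F})|$, reducing the problem to a down-set, where every member is trivially shattered; but the Pajor induction is cleaner and avoids having to check carefully that shifts do not create new shattered sets.
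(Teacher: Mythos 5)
Your proof is correct. Note that the paper does not actually prove Proposition~\ref{S-Sh-l} at all --- it simply cites the literature (the proof in \cite{ARS02} is an algebraic one via standard monomials of vanishing ideals) --- so there is no in-paper argument to compare against. Your Pajor-style induction is the standard self-contained combinatorial proof, and all the steps check out: the identity $|\mathcal{F}_0|+|\mathcal{F}_1|=|\mathcal{F}_0\cup\mathcal{F}_1|+|\mathcal{F}_0\cap\mathcal{F}_1|$, the fact that $\mathcal{F}_0\cup\mathcal{F}_1=\mathcal{F}|_{[n-1]}$ so its shattered sets are shattered by $\mathcal{F}$, the transfer $S\mapsto S\cup\{n\}$ for sets shattered by $\mathcal{F}_0\cap\mathcal{F}_1$ (a set $G\in\mathcal{F}_0\cap\mathcal{F}_1$ realizing a trace $T'$ on $S$ yields both $G$ and $G\cup\{n\}$ in $\mathcal{F}$, realizing $T'$ and $T'\cup\{n\}$ on $S\cup\{n\}$), and the disjointness of the two resulting families of shattered sets. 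Your aside about the shifting alternative is also accurate: the delicate point there is precisely that down-shifts do not enlarge the shattered family.
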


This statement was proved by several authors independently, for a proof see e.g. \cite{ARS02}. Here we are interested in the case of equality. 

\begin{definition}
A set system $\mathcal{F}\subseteq 2^{[n]}$ is \emph{shattering-extremal}, or \emph{s-extremal} for short, if it
shatters exactly $|\mathcal{F}|$ sets, i.e. $|\mathcal{F}|=|\text{Sh}(\mathcal{F})|$.
\end{definition} For example, if $\mathcal{F}$ is a down-set then $\mathcal{F}$ is s-extremal, simply because in this case  $\text{Sh}(\mathcal{F})= \mathcal{F}$. Many interesting results have been obtained in connection with these combinatorial objects, among others by Bollob{\'a}s, Leader and Radcliffe in \cite{BLR89}, by Bollob{\'a}s and Radcliffe in \cite{BR95}, by Frankl in \cite{Frankl96} and recently Kozma and Moran in \cite{KM13} provided further interesting examples of s-extremal set systems. Anstee, R{\'o}nyai and Sali in \cite{ARS02} related shattering to standard monomials of vanishing ideals, and based on this, M\'esz\'aros and R\'onyai in \cite{RM11} developed algebraic methods for the investigation of s-extremal families, which we will briefly recall later.

To broaden the picture, we now mention some well known related results. The \emph{Vapnik-Chervonenkis dimension} of $\mathcal{F}$, denoted by $\text{dim}_{VC}(\mathcal{F})$, is the size of the largest set shattered by $\mathcal{F}$. An easy corollary of the Sauer-Shelah lemma is the following result, known as the Sauer-inequality, which has found applications in a variety of contexts.

\begin{proposition}[\cite{Sauer72},\cite{Shelah72}]\label{S-i}
Let $0\leq k\leq n$ and $\mathcal{F}\subseteq 2^{[n]}$. If $\mathcal{F}$ shatters no set of size $k$, i.e. $\text{dim}_{VC}(\mathcal{F})\leq k-1$, then
\begin{equation}\label{S-i-eq}
|\mathcal{F}|\leq \sum_{i=0}^{k-1} {n\choose i}.
\end{equation}
\end{proposition}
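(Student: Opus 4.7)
The plan is to derive the Sauer inequality directly from the Sauer-Shelah lemma (Proposition \ref{S-Sh-l}). Since that lemma provides $|\mathcal{F}|\leq |\text{Sh}(\mathcal{F})|$, it suffices to establish the upper bound $|\text{Sh}(\mathcal{F})|\leq \sum_{i=0}^{k-1}\binom{n}{i}$ under the hypothesis $\text{dim}_{VC}(\mathcal{F})\leq k-1$.

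The first step I would carry out is to observe that $\text{Sh}(\mathcal{F})$ is a down-set in $2^{[n]}$. Indeed, if $S\in \text{Sh}(\mathcal{F})$ and $T\subseteq S$, then for any $B\subseteq T$ we have $B\subseteq S$, so by the shattering hypothesis there is some $F\in \mathcal{F}$ with $F\cap S=B$. Intersecting both sides with $T$ gives $F\cap T=B\cap T=B$, and since $B\subseteq T$ was arbitrary, $T$ is shattered as well. This is a routine verification and should present no difficulty.

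Given this hereditary property, the condition $\text{dim}_{VC}(\mathcal{F})\leq k-1$ forces $\text{Sh}(\mathcal{F})$ to contain no set of size $\geq k$: any such set would, by the down-set property just proved, contain a shattered subset of size exactly $k$, contradicting the VC-dimension bound. Therefore $\text{Sh}(\mathcal{F})$ is contained in the collection of subsets of $[n]$ of size at most $k-1$, whose total cardinality is $\sum_{i=0}^{k-1}\binom{n}{i}$. Combining this counting bound with the Sauer-Shelah lemma yields the claimed inequality.

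The only substantive ingredient is Proposition \ref{S-Sh-l} itself; once that is taken as given, the passage to the Sauer inequality is the short combinatorial argument sketched above. The main obstacle in a from-scratch treatment would therefore lie in the Sauer-Shelah lemma, which the paper has already deferred to the reference \cite{ARS02}, so nothing further is required here.
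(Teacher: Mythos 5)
Your proof is correct and follows exactly the route the paper intends: the paper gives no separate argument for Proposition~\ref{S-i}, merely calling it ``an easy corollary of the Sauer-Shelah lemma,'' and your derivation (hereditarity of $\text{Sh}(\mathcal{F})$, hence $\text{Sh}(\mathcal{F})\subseteq\bigcup_{i=0}^{k-1}\binom{[n]}{i}$, combined with Proposition~\ref{S-Sh-l}) is precisely that easy corollary spelled out. Nothing is missing.
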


Families satisfying (\ref{S-i-eq}) with equality are called \emph{maximum classes}, and serve as important examples in the theory of machine learning. They have several nice properties, among others they are s-extremal. In the case of uniform families the above bound can be strengthened.

\begin{proposition}[\cite{FP94}]\label{F-P}
Let $0\leq k\leq l\leq n$ and $\mathcal{F}\subseteq {[n]\choose l}$. If $\mathcal{F}$ shatters no set of size $k$, i.e. $\text{dim}_{VC}(\mathcal{F})\leq k-1$, then
\begin{equation*}
|\mathcal{F}|\leq {n\choose k-1}.
\end{equation*}
\end{proposition}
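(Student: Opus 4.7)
The plan is to begin with the Sauer--Shelah bound (Proposition \ref{S-Sh-l}) and sharpen it using the uniformity hypothesis. Sauer--Shelah already yields $|\mathcal{F}|\leq |\text{Sh}(\mathcal{F})|$, and since $\mathcal{F}$ shatters no $k$-set we have $\text{Sh}(\mathcal{F})\subseteq \binom{[n]}{\leq k-1}$; this alone gives only the weaker Sauer inequality $|\mathcal{F}|\leq \sum_{i=0}^{k-1}\binom{n}{i}$. The extra leverage for sharpening the right-hand side to the single binomial $\binom{n}{k-1}$ must come from the fact that every $F\in \mathcal{F}$ has the same size $l$, which we have not used yet.

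My first attempt would be an inclusion-matrix / linear-algebra argument. Let $M$ be the $|\mathcal{F}|\times \binom{n}{k-1}$ matrix with $M_{F,T}=1$ if $T\subseteq F$ and $0$ otherwise, and write $v_F$ for its row indexed by $F$. If the rows $\{v_F:F\in \mathcal{F}\}$ turn out to be linearly independent, the desired bound $|\mathcal{F}|\leq \binom{n}{k-1}$ is immediate. Uniformity enters here through the fact that each $v_F$ has exactly $\binom{l}{k-1}$ ones, and the no-shattered-$k$-set condition is inherited by every subfamily of $\mathcal{F}$, which should be crucial when arguing about the support of a hypothetical linear relation.

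The main obstacle is precisely this linear independence. Given a hypothetical nontrivial relation $\sum_F \alpha_F v_F=0$ with support $\mathcal{F}_0\subseteq \mathcal{F}$, every $T\in\binom{[n]}{k-1}$ would lie either in no member of $\mathcal{F}_0$ or in at least two of them; I would try to combine this with a double count of pairs $\{(F,T):F\in \mathcal{F}_0,\,T\in\binom{F}{k-1}\}$ and with the fact that $\mathcal{F}_0$ also shatters no $k$-set, in order to force the shattering of some $k$-subset by $\mathcal{F}_0$ and derive a contradiction. If this direct route proves too delicate, a natural backup is Frankl's shifting technique: apply the compressions $\sigma_{i,j}$ for $i<j$, which preserve uniformity, cardinality, and (one needs to verify) the no-shattered-$k$-set property, reducing to a left-compressed family in which an explicit injection $\mathcal{F}\hookrightarrow \binom{[n]}{k-1}$ can be written down by sending each $F$ to a canonically chosen $(k-1)$-subset.
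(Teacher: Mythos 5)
The paper states Proposition~\ref{F-P} as a quoted result of Frankl and Pach and gives no proof, so there is no internal argument to compare against; I am measuring your proposal against the standard proof from the cited source. Your high-level plan is the right one — the Frankl--Pach argument is exactly a linear-independence statement about inclusion matrices, and once the rows of $M$ are independent the bound is immediate — but two essential steps are missing, and the substitutes you sketch would not close them.

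First, the independence of the rows cannot be reached by the double count you describe. A dependence $\sum_F\alpha_Fv_F=0$ only tells you that each $(k-1)$-set lies in zero or at least two members of the support $\mathcal{F}_0$, and counting the pairs $(F,T)$ yields an inequality with no visible route to a shattered $k$-set. The actual argument is a signed, minimal-witness argument: setting $g_Y(Z)=\sum_{F\,:\,F\cap Y=Z}\alpha_F$, inclusion--exclusion over $Y\setminus Z$ expresses $g_Y(Z)$ through the column sums $\sum_{F\supseteq A}\alpha_F$ for $|A|\leq|Y|$, so $g_Y\equiv 0$ whenever $|Y|\leq k-1$; a minimal $Y$ with $g_Y\not\equiv 0$ exists (take $Y=[n]$) and has $|Y|\geq k$, and minimality forces $g_Y(Z)=-g_Y(Z\cup\{y\})$ for every $y\in Y$, hence $g_Y(Z)\neq 0$ for \emph{all} $Z\subseteq Y$, i.e.\ $Y$ (and so some $k$-subset of it) is shattered — a contradiction. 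Second, this computation needs the dependence to hold against all columns indexed by sets of size \emph{at most} $k-1$, not only size exactly $k-1$; this is precisely where $l$-uniformity enters. For $|T|=j<k-1$ and an $l$-set $F\supseteq T$, the number of $(k-1)$-sets $T'$ with $T\subseteq T'\subseteq F$ is the constant $\binom{l-j}{k-1-j}$, so the smaller columns lie in the span of the $(k-1)$-columns and any dependence propagates down; equivalently, this is what improves the conclusion from the Sauer bound $\sum_{i<k}\binom{n}{i}$ to the single binomial $\binom{n}{k-1}$. The constant row sum you mention is only the case $j=0$ of this identity and does not suffice on its own. The shifting backup is plausible in spirit, but the two verifications you flag (preservation of the no-shattered-$k$-set property under $\sigma_{i,j}$, and the explicit injection into $\binom{[n]}{k-1}$ for compressed families) are exactly the content of that alternative proof and are left open, so it does not repair the gap either.
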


A set family $\mathcal{S} \subseteq 2^{[n]}$ is called a \emph{Sperner family}, or an \emph{antichain}, if none of its sets is contained in another. In connection with Proposition~\ref{F-P} it is an interesting open problem due to Frankl~\cite{Frankl89} whether the above bound holds for Sperner families in general not merely uniform ones. Sperner families will play an important role in our study of s-extremal set systems as well, since one can use them to `build' s-extremal set systems.

\medskip

The main goal in connection with s-extremal families is to find good characterisations of them. A positive answer to the following conjecture, formulated in \cite{MR14}, would be a possible way for this.

\begin{conjecture}\label{conj}
For every s-extremal set system $\mathcal{F} \subsetneq 2^{[n]}$ there exists $F \notin \mathcal{F}$ such that $\mathcal{F} \cup \{F\}$ is again s-extremal.
\end{conjecture}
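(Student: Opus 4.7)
My plan is to attack the conjecture through the algebraic framework of Anstee--R\'onyai--Sali~\cite{ARS02}, later developed by M\'esz\'aros--R\'onyai~\cite{RM11}. Identify each set in $\mF$ with its characteristic vector in $\{0,1\}^n \subseteq \mathbb{R}^n$ and let $I(\mF) \subseteq \mathbb{R}[x_1,\ldots,x_n]$ be the corresponding vanishing ideal. Fix the pure lex term order, and denote by $\mathrm{sm}(\mF)$ the set of standard monomials. Then $|\mathrm{sm}(\mF)|=|\mF|$, every standard monomial is squarefree (since $x_i^2-x_i\in I(\mF)$), so $\mathrm{sm}(\mF)=\{x_S : S\in \mathcal{M}(\mF)\}$ for a uniquely determined $\mathcal{M}(\mF)\subseteq 2^{[n]}$. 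The key input from \cite{ARS02} is that $\mathcal{M}(\mF)\subseteq\text{Sh}(\mF)$, with equality precisely when $\mF$ is s-extremal.

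Reformulate the conjecture: for $F^* \notin \mF$, set $\mF' := \mF \cup \{F^*\}$. Since $I(\mF')$ is a codimension-one subspace of $I(\mF)$, we have $\mathrm{sm}(\mF')=\mathrm{sm}(\mF)\cup\{x_{T^*}\}$ for a unique $T^* = T^*(F^*)$ with $T^*\notin \mathcal{M}(\mF)$. Because $\text{Sh}(\mF)\subseteq\text{Sh}(\mF')$ and Sauer--Shelah forces $|\text{Sh}(\mF')|\geq|\mF|+1$, the family $\mF'$ is s-extremal exactly when $\text{Sh}(\mF')=\text{Sh}(\mF)\cup\{T^*\}$. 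Thus the task reduces to the following: given s-extremal $\mF\subsetneq 2^{[n]}$, find $F^* \notin \mF$ for which $T^*(F^*)$ is actually shattered by $\mF'$.

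For the construction step, the map $F^* \mapsto T^*(F^*)$ is computable by linear algebra: $T^*(F^*)$ is the smallest (in lex) $T\subseteq[n]$ for which the column $\bigl(\prod_{i\in T}(\chi_F)_i\bigr)_{F\in\mF\cup\{F^*\}}$ fails to lie in the column span of those indexed by $\mathcal{M}(\mF)$. For each minimal $T \notin \mathcal{M}(\mF)$ in lex, I would enumerate the candidate $F^*$'s that realise this $T$ and test whether any such $F^*$ also satisfies $\mF'|_T = 2^T$. Concretely, among the at most $2^{|T|}-|\mF|_T|$ patterns of $2^T$ missing from $\mF|_T$, at least one should be realisable as $F^* \cap T$ for some $F^*$ with $T^*(F^*)=T$.

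The main obstacle is precisely this existence guarantee: there is no a priori reason that some minimal non-standard $T$ admits a compatible $F^*$, and establishing it for every s-extremal $\mF$ is the content of the open problem. My line of attack would be induction on $n$ via the decomposition $\mF = \mF_{\bar n} \sqcup (\mF_n + \{n\})$ with $\mF_{\bar n}, \mF_n \subseteq 2^{[n-1]}$, together with the recursive identity $\text{Sh}(\mF)=\text{Sh}(\mF_{\bar n}\cup\mF_n)\sqcup\{S\cup\{n\}:S\in\text{Sh}(\mF_{\bar n})\cap\text{Sh}(\mF_n)\}$. S-extremality of $\mF$ forces s-extremality of $\mF_{\bar n}\cup\mF_n$ together with the compatibility equation $|\mF_{\bar n}\cap\mF_n|=|\text{Sh}(\mF_{\bar n})\cap\text{Sh}(\mF_n)|$. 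Applying the inductive hypothesis to extend $\mF_{\bar n}\cup\mF_n$ or $\mF_{\bar n}\cap\mF_n$ while \emph{simultaneously} preserving s-extremality and this matching identity is the delicate step where, in the absence of extra combinatorial structure (such as that provided by a Sperner family), the induction tends to break; this is where a complete proof must currently stall.
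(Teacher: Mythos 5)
The statement you set out to prove is Conjecture~\ref{conj}, which is open: the paper itself does not prove it, only two special cases (Theorem~\ref{thm::elimination_main} for families of the form $\mF(\mS,h_A)$ and Theorem~\ref{thm::conjecture_for_small_sperner} for $|\mS|\leq 4$). Your proposal is likewise not a proof, as you yourself concede, and the gap is exactly the whole content of the conjecture: both of your reductions are correct but circular. Saying ``find $F^*\notin\mF$ with $\text{Sh}(\mF\cup\{F^*\})=\text{Sh}(\mF)\cup\{T^*(F^*)\}$'' is just Conjecture~\ref{conj} rewritten in the standard-monomial language of \cite{ARS02,RM11}, and the linear-algebra description of $F^*\mapsto T^*(F^*)$ gives a way to \emph{test} candidates, not to prove one exists. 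One genuine slip worth flagging: your later paraphrase ``find $F^*$ for which $T^*(F^*)$ is shattered by $\mF'$'' is strictly weaker than your own (correct) criterion, since you must also ensure no \emph{second} set becomes shattered; e.g.\ for $\mF=\{\emptyset\}\subseteq 2^{[2]}$ and $F^*=\{1,2\}$ the set $T^*$ is shattered by $\mF'$, yet $\mF'$ shatters three sets and is not s-extremal. The recursive step is similarly honest but inconclusive: your decomposition identities and the consequences of s-extremality for $\mF_{\bar n}\cup\mF_n$ and $\mF_{\bar n}\cap\mF_n$ are fine, but extending the smaller families while preserving the compatibility equation is precisely where you admit the induction stalls, so no new case of the conjecture is actually established.

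For comparison, the paper's route is combinatorial rather than algebraic: Lemma~\ref{unique lemma} writes every s-extremal $\mF$ uniquely as $\mF(\mS,h)$ with $\text{Sh}(\mF)=\mH(\mS)$, and Lemma~\ref{lem:Conj-equiv} turns the conjecture into the cube-covering statement of Conjecture~\ref{conj::equiv}: some $\mQ_{S_0,h(S_0)}$ is not covered by the remaining cubes $\mQ_{S,h(S)}$. This is the same kind of ``a good $F^*$ exists'' assertion as yours (indeed the good $F^*$ must lie in $\mQ_{S_0,h(S_0)}$, where $S_0$ is a minimal unshattered set and $h(S_0)$ its unique missing trace), but the cube formulation yields usable structure --- Claim~\ref{claim:Q-sperner} and the auxiliary graph $G_{\mS,h}$ --- which is what makes the cases $h=h_A$ and $|\mS|\leq 4$ tractable. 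If you want to develop your Gr\"obner/standard-monomial framing further, a realistic benchmark would be to re-derive one of these special cases within it (the paper's Theorem~\ref{thm::Grobner_equation} shows the two languages are equivalent at the level of the defining equation $|\mH(\mS)|=|\mF(\mS,h)|$), or to make precise how $T^*(F^*)$ relates to the minimal unshattered sets of $\mF$; as it stands, your text is a reasonable research plan but not a proof, and should not be presented as one.
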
  

As by Theorem 2 in \cite{BR95} $\mathcal{F}$ is s-extremal if and only if $2^{[n]}\setminus  \mathcal{F}$ is so, the above conjecture has an equivalent form, namely that for every non-empty s-extremal set system $\mathcal{F} \subseteq 2^{[n]}$ there exists $F \in \mathcal{F}$ such that $\mathcal{F} \setminus \{F\}$ is again s-extremal. It will be always clear from the context which form of the conjecture we consider. This latter form was formulated by Litman and Moran independently, and called the corner peeling conjecture. For maximum classes essentially the same was conjectured by Kuzmin an Warmuth in \cite{KW07} and proven by Rubinstein and Rubinstein in \cite{RR12}. There are several other cases when the conjecture is known to be true. First of all it is trivially true for down-sets, as there you can always add any minimal element not belonging to it. M\'esz\'aros and R\'onyai in \cite{MR13} and \cite{MR14}, using a graph theoretic approach, proved the conjecture for s-extremal families of VC-dimension at most $2$. According to personal communication, the same result was independently proven by Litman and Moran. Some examples of Anstee in \cite{A80} and of F\"uredi and Quinn in \cite{FQ83} also turned out to be s-extremal and they also satisfy the conjecture. According to Moran and Warmuth \cite{MW16} the conjecture, if true, would imply unlabeled compression schemes for s-extremal classes, which so far were known to exist for maximum classes.

\subsection*{Results.} In order to state our main results, we first introduce some notation.  Let $\mathcal{S}\subseteq 2^{[n]}$ be a Sperner family and let  $h:\mathcal{S}\rightarrow 2^{[n]}$ be a function such that $h(S)\subseteq S$ for every $S\in \mathcal{S}$. For $H\subseteq S\subseteq [n]$ define
\begin{equation*}
\mathcal{P}_{S} = S + 2^{[n]\setminus S}\text{ and  }\mathcal{Q}_{S,H} = H + 2^{[n]\setminus S}.
\end{equation*}
Note that $\mathcal{P}_{S}$ and $\mathcal{Q}_{S,H}$ are hypercubes of the same dimension, $n-|S|$, in particular $|\mathcal{P}_{S}|=|\mathcal{Q}_{S,H}|$, $\mathcal{P}_S$ is the collection of all sets containing $S$ and $\mQ_{S,H}$ is the collection of all sets whose intersection with $S$ is $H$. We define the up-set generated by $\mathcal{S}$ as
\begin{equation*}
\text{Up}(\mathcal{S})=\{F\subseteq [n]\ :\ \exists S\in \mathcal{S}\text{ such that }S\subseteq F\}.
\end{equation*}
Note that $\text{Up}(\mathcal{S}) = \bigcup_{S\in \mathcal{S}}\mathcal{P}_S $. Further set
\begin{equation*}
\mathcal{H}(\mathcal{S}) = 2^{[n]}\setminus \text{Up}(\mathcal{S})=2^{[n]}\setminus \bigcup_{S\in \mathcal{S}}\mathcal{P}_S\ \text{ and}
\end{equation*}
\begin{equation*}
\mathcal{F}(\mathcal{S},h) = 2^{[n]}\setminus \bigcup_{S\in \mathcal{S}} \mathcal{Q}_{S,h(S)}.
\end{equation*}

At this point we would like to remark that if $\mS$ is a general family, not necessarily a Sperner family, then, by passing to the collection of minimal elements in $\mS$, most of our results remain true/can still be formulated. However our interest mostly lies in the case of Sperner families and so for simplicity we will consider only them.

 The following proposition is the starting point for our discussion which might be a good first step towards a nice characterisation of s-extremal families. 
\begin{proposition}\label{prop::s-extremal}
Let $\mathcal{S}\subseteq 2^{[n]}$ be a Sperner family and let  $h:\mathcal{S}\rightarrow 2^{[n]}$ be a function such that $h(S)\subseteq S$ for every $S\in \mathcal{S}$. Then $\mathcal{F}=\mathcal{F}(\mathcal{S},h)$ is s-extremal with $\text{Sh}(\mF) = \mH(\mS)$ if and only if
\begin{equation}\label{s-extremal_condition}
\left| \mF(\mS,h)\right|=\left|  \mH(\mS) \right|.
\end{equation}
\end{proposition}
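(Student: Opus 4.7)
The plan is to sandwich $|\mathcal{F}|$, $|\text{Sh}(\mathcal{F})|$ and $|\mathcal{H}(\mathcal{S})|$ between each other using the Sauer-Shelah lemma together with one key \emph{structural} inclusion, and then read off both directions of the equivalence for free.

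The main observation I would establish first, with no hypothesis other than the setup, is that
\[
\text{Sh}(\mathcal{F}(\mathcal{S},h)) \;\subseteq\; \mathcal{H}(\mathcal{S}).
\]
To prove this I would argue by contrapositive: suppose $T \notin \mathcal{H}(\mathcal{S})$, so $T \in \mathcal{P}_{S}$ for some $S \in \mathcal{S}$, i.e.\ $S \subseteq T$. Set $A := h(S)$, which satisfies $A \subseteq S \subseteq T$. If $\mathcal{F}$ shattered $T$, there would exist $F \in \mathcal{F}$ with $F \cap T = A$; but then
\[
F \cap S \;=\; (F \cap T) \cap S \;=\; A \cap S \;=\; h(S),
\]
so $F \in \mathcal{Q}_{S,h(S)}$, contradicting $F \in \mathcal{F} = 2^{[n]} \setminus \bigcup_{S}\mathcal{Q}_{S,h(S)}$. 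Hence $T \notin \text{Sh}(\mathcal{F})$, as desired. The step where $h(S) \subseteq S$ is used crucially here is the identification $A \cap S = h(S)$.

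With that inclusion in hand, the proposition is essentially bookkeeping. For the ``if'' direction, assume $|\mathcal{F}(\mathcal{S},h)| = |\mathcal{H}(\mathcal{S})|$. Chaining Proposition~\ref{S-Sh-l} with the inclusion above yields
\[
|\mathcal{F}| \;\leq\; |\text{Sh}(\mathcal{F})| \;\leq\; |\mathcal{H}(\mathcal{S})| \;=\; |\mathcal{F}|,
\]
forcing equality throughout; in particular $\mathcal{F}$ is s-extremal, and $\text{Sh}(\mathcal{F}) = \mathcal{H}(\mathcal{S})$ since the two sets have the same size and one contains the other. The ``only if'' direction is immediate: if $\mathcal{F}$ is s-extremal and $\text{Sh}(\mathcal{F}) = \mathcal{H}(\mathcal{S})$, then $|\mathcal{F}| = |\text{Sh}(\mathcal{F})| = |\mathcal{H}(\mathcal{S})|$.

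The only genuine content is the structural inclusion $\text{Sh}(\mathcal{F}) \subseteq \mathcal{H}(\mathcal{S})$; I do not anticipate any real obstacle there since it is just tracing through the definitions, and the Sperner property of $\mathcal{S}$ is not even needed for this inequality (it is presumably relevant only when one later tries to verify condition~(\ref{s-extremal_condition}) via inclusion-exclusion on the $\mathcal{Q}_{S,h(S)}$).
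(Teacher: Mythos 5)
Your proof is correct and follows essentially the same route as the paper: the key step in both is the inclusion $\text{Sh}(\mathcal{F}(\mathcal{S},h))\subseteq \mathcal{H}(\mathcal{S})$ (the paper's Claim~\ref{fshsh}, which it proves directly by noting no $S\in\mathcal{S}$ nor any superset of it can be shattered, while you argue the equivalent contrapositive), followed by the same Sauer--Shelah sandwich. Your side remark that the Sperner hypothesis is not needed for the inclusion also matches a remark made in the paper.
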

The reason this really might be a good starting point to tackle the elimination conjecture is the following lemma.

\begin{lemma}\label{unique lemma}
Let $\mF \subseteq 2^{[n]}$ be an s-extremal family. Then there is a unique Sperner family $\mS$ and a unique function $h:\mathcal{S}\rightarrow 2^{[n]}$ with $h(S)\subseteq S$ for every $S\in \mathcal{S}$ such that $\mF=\mF(\mS,h)$ and $\text{Sh}(\mF) = \mH(\mS)$.
\end{lemma}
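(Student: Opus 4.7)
The plan is to construct $\mS$ from $\text{Sh}(\mF)$ and $h$ from the traces $\mF|_S$, then check both the identity $\mF = \mF(\mS, h)$ and the uniqueness. Since shattering is inherited by subsets, $\text{Sh}(\mF)$ is a down-set, so $2^{[n]} \setminus \text{Sh}(\mF)$ is an up-set; I would let $\mS$ be its Sperner family of minimal elements, giving $\text{Up}(\mS) = 2^{[n]} \setminus \text{Sh}(\mF)$, equivalently $\mH(\mS) = \text{Sh}(\mF)$. Uniqueness of $\mS$ is then immediate: any Sperner family $\mS'$ with $\mH(\mS') = \text{Sh}(\mF)$ generates the same up-set and must coincide with its set of minimal elements, namely $\mS$.

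For each $S \in \mS$ we have $S \notin \text{Sh}(\mF)$, so I would pick $h(S) \subseteq S$ to be any subset missing from the trace $\mF|_S$. Then $\mQ_{S, h(S)} \cap \mF = \emptyset$ for every $S$, so $\mF \subseteq \mF(\mS, h)$. Conversely, $\text{Sh}(\mF(\mS, h)) \subseteq \mH(\mS)$: whenever $T$ contains some $S \in \mS$, no element of $\mF(\mS, h)$ has $S$-trace equal to $h(S)$, so the trace of $\mF(\mS, h)$ on $T$ misses every subset of $T$ whose intersection with $S$ equals $h(S)$, and $T$ is not shattered. Combined with Sauer-Shelah,
\[
|\mF(\mS, h)| \le |\text{Sh}(\mF(\mS, h))| \le |\mH(\mS)| = |\text{Sh}(\mF)| = |\mF|,
\]
so equality holds throughout, forcing $\mF = \mF(\mS, h)$ and $\text{Sh}(\mF) = \mH(\mS)$.

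The only remaining point is uniqueness of $h$, which reduces to showing that for each $S \in \mS$ exactly one $H \subseteq S$ is missing from $\mF|_S$, i.e.\ $|\mF|_S| = 2^{|S|} - 1$. Minimality of $S$ in $2^{[n]} \setminus \text{Sh}(\mF)$ forces every proper subset of $S$ to be shattered, so $\text{Sh}(\mF|_S) = 2^S \setminus \{S\}$, and Sauer-Shelah gives the upper bound $|\mF|_S| \le 2^{|S|} - 1$. The matching lower bound is the main obstacle: it is equivalent to the fact that the trace of an s-extremal family is itself s-extremal. I would establish this by induction on $|[n] \setminus S|$: fixing $i \notin S$, splitting $\mF = \mF_0 \sqcup \mF_1$ according to whether $i \in F$, and writing $\mF' = \{F \setminus \{i\} : F \in \mF_1\}$, a direct case analysis yields
\[
\text{Sh}(\mF) = \text{Sh}(\mF_0 \cup \mF') \sqcup \bigl\{T \cup \{i\} : T \in \text{Sh}(\mF_0) \cap \text{Sh}(\mF')\bigr\},
\]
together with $|\mF| = |\mF_0 \cup \mF'| + |\mF_0 \cap \mF'|$. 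Applying Sauer-Shelah to $\mF_0 \cup \mF'$ and to $\mF_0 \cap \mF'$ (and using $\text{Sh}(\mF_0 \cap \mF') \subseteq \text{Sh}(\mF_0) \cap \text{Sh}(\mF')$), the s-extremality of $\mF$ forces all inequalities to equalities, yielding in particular that $\mF|_{[n] \setminus \{i\}} = \mF_0 \cup \mF'$ is s-extremal. Iterating completes the argument and delivers the uniqueness of $h$.
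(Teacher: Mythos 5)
Your proposal is correct, and its skeleton matches the paper's: take $\mS$ to be the minimal elements of the up-set $2^{[n]}\setminus \text{Sh}(\mF)$ (uniqueness of $\mS$ being the uniqueness of the minimal generators of an up-set), define $h(S)$ as a subset of $S$ missing from the trace $\mF|_S$, and then squeeze $|\mF|\leq |\mF(\mS,h)|\leq |\text{Sh}(\mF(\mS,h))|\leq |\mH(\mS)|=|\mF|$ to force $\mF=\mF(\mS,h)$. The one substantive difference is how the crucial fact is handled that for a minimal non-shattered $S$ \emph{exactly one} subset of $S$ is missing from $\mF|_S$ (which is what makes $h$ well-defined and unique): the paper simply cites this from an earlier work of M\'esz\'aros, whereas you prove it from scratch by showing that traces of s-extremal families are s-extremal, via the standard decomposition $\mF=\mF_0\sqcup\mF_1$ along a coordinate $i$, the identity $\text{Sh}(\mF)=\text{Sh}(\mF_0\cup\mF')\sqcup\{T\cup\{i\}: T\in \text{Sh}(\mF_0)\cap\text{Sh}(\mF')\}$, and the count $|\mF|=|\mF_0\cup\mF'|+|\mF_0\cap\mF'|$; combined with Sauer--Shelah and $\text{Sh}(\mF_0\cap\mF')\subseteq \text{Sh}(\mF_0)\cap\text{Sh}(\mF')$, s-extremality forces $|\mF_0\cup\mF'|=|\text{Sh}(\mF_0\cup\mF')|$, and iterating gives $|\mF|_S|=2^{|S|}-1$. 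This trace-preservation argument is correct (it is essentially the Bollob\'as--Radcliffe result) and buys you a self-contained proof at the cost of some extra length; the paper's version buys brevity at the cost of an external dependency.
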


\smallskip

We will study the applications of Proposition~\ref{prop::s-extremal} in three different ways. Firstly we will prove Conjecture~\ref{conj} for a special class motivated by Equation~\ref{s-extremal_condition}. More precisely, we will show the following theorem.
\begin{theorem}\label{thm::elimination_main}
	Let $\mathcal{S}\subseteq 2^{[n]}$ be a Sperner family and $A \subseteq [n]$ a fixed set. Furthermore let $h_A:\mathcal{S}\rightarrow 2^{[n]}$ be defined as $h_A(S)=S\cap A$. Then $\mF(\mS, h_A)$ is s-extremal and Conjecture \ref{conj} holds for $\mathcal{F}(\mathcal{S},h_A)$, i.e. there exists $F\notin \mathcal{F}(\mathcal{S},h_A)$ such that $\mathcal{F}'=\mathcal{F}(\mathcal{S},h_A)\cup\{F\}$ is again s-extremal. Moreover $\mathcal{F}'=\mathcal{F}(\mathcal{S}',h_A)$ for some suitable Sperner family $\mathcal{S}'$.
\end{theorem}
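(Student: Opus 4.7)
The plan is to identify $\mF(\mS,h_A)$ with a transparent object: the up-set of transversals of $\mS$, translated by $A$. Unwinding the definitions, $F\in\mQ_{S,S\cap A}$ iff $F\cap S=S\cap A$ iff $(F\triangle A)\cap S=\emptyset$, so $F\notin\mF(\mS,h_A)$ iff $F\triangle A$ fails to meet some set of $\mS$. Writing $\mathcal{T}(\mS)=\{T\subseteq[n]:T\cap S\neq\emptyset\ \text{for all}\ S\in\mS\}$ for the (up-set of) transversals, this gives the clean reformulation
\[
\mF(\mS,h_A)\ =\ A\triangle\mathcal{T}(\mS)\ :=\ \{A\triangle T:T\in\mathcal{T}(\mS)\}.
\]
From here s-extremality of $\mF(\mS,h_A)$ follows from two elementary observations: (i) $\mathcal{T}(\mS)$ is an up-set, so its complement is a down-set, hence s-extremal (since $\text{Sh}(\mathcal{D})=\mathcal{D}$ for a down-set $\mathcal{D}$), and then $\mathcal{T}(\mS)$ itself is s-extremal by Theorem 2 of \cite{BR95}; and (ii) for each $S\subseteq[n]$ the involution $X\mapsto(A\cap S)\triangle X$ permutes $2^S$, so $F\mapsto A\triangle F$ is a shattering-preserving bijection of $2^{[n]}$, which gives $\text{Sh}(A\triangle\mathcal{T}(\mS))=\text{Sh}(\mathcal{T}(\mS))$ and hence s-extremality of $\mF(\mS,h_A)$.

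For the elimination part, I would pick any $S_0\in\mS$ and take $T_0=[n]\setminus S_0$, which is a non-transversal of $\mS$ witnessed by $S_0$. The crucial point is that $\mathcal{T}(\mS)\cup\{T_0\}$ is still an up-set: for any $T\supsetneq T_0$, on the one hand $T\cap S_0\supseteq T\setminus T_0\neq\emptyset$, and on the other hand for every $S\in\mS\setminus\{S_0\}$ the Sperner property forces $S\not\subseteq S_0$, giving $T\cap S\supseteq T_0\cap S=S\setminus S_0\neq\emptyset$. Hence every proper superset of $T_0$ already lies in $\mathcal{T}(\mS)$, so the augmented family is an up-set.

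To close the argument I would invoke the standard Sperner/up-set duality: every up-set $\mathcal{U}\subseteq 2^{[n]}$ equals $\mathcal{T}(\mS')$ for a unique Sperner family $\mS'$, obtained as the collection of complements of the maximal elements of the down-set $2^{[n]}\setminus\mathcal{U}$. Applying this to $\mathcal{U}=\mathcal{T}(\mS)\cup\{T_0\}$ produces the required $\mS'$, and with $F_0=A\triangle T_0\notin\mF(\mS,h_A)$ one obtains
\[
\mF(\mS,h_A)\cup\{F_0\}\ =\ A\triangle\bigl(\mathcal{T}(\mS)\cup\{T_0\}\bigr)\ =\ A\triangle\mathcal{T}(\mS')\ =\ \mF(\mS',h_A),
\]
which is s-extremal by the same up-set plus symmetric-difference argument as before. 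No single step is genuinely hard; the main obstacle is conceptual, namely spotting the translation $\mF(\mS,h_A)=A\triangle\mathcal{T}(\mS)$, after which the whole theorem collapses to routine facts about Sperner duality and shattering-invariance of the symmetric-difference involution.
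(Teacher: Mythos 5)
Your proof is correct, and it takes a genuinely different route from the paper. The paper stays inside its general framework: it verifies the counting criterion $|\mF(\mS,h_A)|=|\mH(\mS)|$ of Proposition~\ref{prop::s-extremal} via the inclusion--exclusion analysis of Subsection~\ref{equiv equation} (for $h=h_A$ every compatibility indicator $\mathbb{I}_{i,j}$ equals $1$), and for the elimination step it replaces $S_0$ by the one-element extensions $S_0\cup\{v\}$ needed to keep $\mS'$ Sperner, checking $\mQ_{S_i',H_i'}\subseteq\mQ_{S_0,H_0}$ to get $\mF\subseteq\mF'$. You instead observe that $F\in\mQ_{S,S\cap A}$ iff $(F\triangle A)\cap S=\emptyset$, so that $\mF(\mS,h_A)$ is the image under the shattering-preserving involution $X\mapsto A\triangle X$ of the up-set of transversals $\mathcal{T}(\mS)$; s-extremality then follows from the down-set fact plus the complementation theorem of \cite{BR95} (which the paper itself quotes), and the elimination step becomes the transparent observation that adjoining the maximal non-transversal $[n]\setminus S_0$ keeps an up-set, with blocker duality recovering $\mS'$ (your $\mS'$ is in fact the same family as the paper's, just described via complements of maximal elements rather than via one-element extensions of $S_0$). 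All the individual steps check out, including the Sperner-property argument that $T_0\cap S=S\setminus S_0\neq\emptyset$ for $S\neq S_0$. What your approach buys is a structural explanation -- these families are exactly the XOR-translates of up-sets, which makes both conclusions essentially immediate; what the paper's approach buys is that it exercises the machinery (Proposition~\ref{prop::s-extremal}, the indicator calculus, the explicit $\mS'$ construction) that it reuses for Lemma~\ref{lem:Conj-equiv} and Theorem~\ref{thm::conjecture_for_small_sperner}, and it yields $\text{Sh}(\mF(\mS,h_A))=\mH(\mS)$ directly, which your argument also gives (since $\text{Sh}(\mathcal{T}(\mS))=2^{[n]}\setminus\text{Up}(\mS)$) but does not state.
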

Secondly, we will prove Conjecture~\ref{conj} when the Sperner family corresponding to the s-extremal family is small.  
\begin{theorem}\label{thm::conjecture_for_small_sperner}
	Let $\mS \subseteq 2^{[n]}$ be a Sperner family of size at most four, $h:\mathcal{S}\rightarrow 2^{[n]}$ a function such that $h(S)\subseteq S$ for every $S\in \mathcal{S}$ and suppose that the resulting family $\mF(\mS,h)$ is s-extremal. Then Conjecture~\ref{conj} holds for $\mF(\mS,h)$, i.e. there is $F\notin \mathcal{F}(\mathcal{S},h)$ such that $\mathcal{F}'=\mathcal{F}(\mathcal{S},h)\cup\{F\}$ is again s-extremal.
\end{theorem}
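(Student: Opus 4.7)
The plan is to combine Theorem~\ref{thm::elimination_main} with a case analysis driven by the ``consistency'' structure of $h$ across subfamilies of $\mathcal{S}$. I would call a subfamily $\mathcal{T} \subseteq \mathcal{S}$ \emph{$h$-consistent} if there exists $A \subseteq [n]$ with $h(S) = S \cap A$ for every $S \in \mathcal{T}$. A standard piecewise construction on the Venn diagram of $\bigcup_{S \in \mathcal{T}} S$ should show this is equivalent to the pairwise conditions $h(S) \cap S' = h(S') \cap S$ for all $S, S' \in \mathcal{T}$, and equivalent to $\bigcap_{S \in \mathcal{T}} \mathcal{Q}_{S, h(S)} \neq \emptyset$; in the latter case the intersection has size $2^{n - |\bigcup_{S \in \mathcal{T}} S|}$.

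Applying inclusion-exclusion to both sides of~(\ref{s-extremal_condition}) and noting that the singleton terms match (since $|\mathcal{Q}_{S,h(S)}| = |\mathcal{P}_S| = 2^{n-|S|}$), s-extremality translates into
\begin{equation*}
\sum_{\mathcal{T} \subseteq \mathcal{S},\ |\mathcal{T}| \geq 2,\ \mathcal{T}\text{ not }h\text{-consistent}} (-1)^{|\mathcal{T}|+1}\, 2^{n - |\bigcup_{S \in \mathcal{T}} S|} = 0.
\end{equation*}
Since inconsistency is upward closed in the subset lattice of $\mathcal{S}$, the inconsistent subfamilies are determined by the inconsistent pairs, and the identity above severely restricts which pair-patterns and which containment relations among the $S_i$ can coexist with s-extremality.

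If $\mathcal{S}$ itself is $h$-consistent then $h = h_A$ for a suitable $A$ and Theorem~\ref{thm::elimination_main} applies directly. Otherwise I would enumerate the inconsistency patterns allowed by the identity for $|\mathcal{S}| \leq 4$. For $|\mathcal{S}| \leq 2$, any inconsistency immediately violates the identity. For $|\mathcal{S}| = 3$, writing $\mathcal{S} = \{S_1, S_2, S_3\}$, the identity forces exactly one inconsistent pair $\{S_i, S_j\}$ together with the containment $S_k \subseteq S_i \cup S_j$ for the remaining set. For $|\mathcal{S}| = 4$ a finite but longer list of patterns arises, each again imposing strong containment relations among $S_1, \ldots, S_4$. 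In each remaining inconsistent case, guided by Lemma~\ref{unique lemma}, I would construct an explicit $F$ lying in a single block $\mathcal{Q}_{S_i, h(S_i)}$ such that, after adding $F$, that block decomposes into smaller $\mathcal{Q}$-blocks which, together with the unchanged blocks $\mathcal{Q}_{S_j, h(S_j)}$ for $j \neq i$, correspond to a new Sperner family $\mathcal{S}'$ and function $h'$; s-extremality of $\mathcal{F}(\mathcal{S}', h')$ would then be verified via the same inclusion-exclusion identity.

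The hard part will be the case analysis for $|\mathcal{S}| = 4$: the number of allowed inconsistency patterns grows considerably, and for each one must simultaneously extract the implied containment relations among $S_1, \ldots, S_4$ from the identity and design the modification $\mathcal{S} \to \mathcal{S}'$ so that the updated inclusion-exclusion balance holds. A subsidiary difficulty is ruling out inconsistency patterns that look permissible locally but violate the identity once all terms are accounted for; handling these uniformly, perhaps by isolating a ``witness element'' of inconsistency lying outside $\bigcup_{j \neq i} S_j$, seems to be the cleanest route.
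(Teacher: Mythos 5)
Your setup is sound and overlaps substantially with the paper's: your notion of $h$-consistency of a pair is exactly the paper's indicator $\mathbb{I}_{i,j}$ (equivalently, adjacency in its auxiliary graph $G_{\mS,h}$), your equivalence between pairwise consistency, global consistency via $A=\bigcup h(S)$, and non-emptiness of $\bigcap \mQ_{S,h(S)}$ is correct, and your rewritten inclusion--exclusion identity is precisely the equation derived in Subsection~2.2. Your analysis of $|\mS|\le 3$ checks out: the identity does kill every inconsistency pattern except a single inconsistent pair with $S_k\subseteq S_i\cup S_j$, and the fully consistent case is Theorem~\ref{thm::elimination_main}. However, the proposal has a genuine gap: everything that constitutes the actual content of the theorem is deferred. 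Two steps are missing. First, even in the surviving $|\mS|=3$ case and throughout $|\mS|=4$, you never establish that the required set $F$ \emph{exists}, i.e.\ that some $\mQ_{S_0,h(S_0)}$ is not covered by $\bigcup_{S\ne S_0}\mQ_{S,h(S)}$. Knowing the containment relations among the $S_i$ forced by the identity does not by itself produce such an $F$; this covering question is the heart of the matter, and your plan simply asserts that an ``explicit $F$'' will be constructed. Second, once $F$ is found, verifying that $\mF\cup\{F\}$ is s-extremal by redoing inclusion--exclusion for a new pair $(\mS',h')$ in every case is real work that you have not carried out and that would have to be repeated per configuration.

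The paper organizes this differently and more economically. It first proves (Lemma~\ref{lem:Conj-equiv}) that for s-extremal $\mF(\mS,h)$ the elimination conjecture is \emph{equivalent} to the single covering statement ``some $\mQ_{S_0,h(S_0)}$ is not covered by the other cubes'' (Conjecture~\ref{conj::equiv}); the construction of $\mS'$, the extension of $h$, and the s-extremality check are done once and for all inside that lemma. It then attacks the covering statement through the intersection graph: a vertex of degree $\le 1$ is handled by Claim~\ref{claim:Q-sperner}, a complete graph reduces to $h=h_A$, and for $N=4$ only $C_4$ and $K_4^-$ remain, each dispatched by a short geometric argument about when a cube can be partitioned or covered by two intersecting cubes --- notably \emph{without} invoking the inclusion--exclusion identity at all. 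Your route, by contrast, multiplies the $N=4$ case analysis (all nonempty inconsistency graphs compatible with the identity, rather than just two graphs) and still leaves the covering/existence argument unproved in each case. To repair the proposal you would need, at minimum, a uniform version of the paper's Lemma~\ref{lem:Conj-equiv} to avoid re-verifying s-extremality case by case, and an actual argument (such as the subcube-splitting argument the paper uses for $C_4$ and $K_4^-$) showing that a witness $F$ exists in the configurations your identity does not exclude.
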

For this, we shall present an equivalent form of the conjecture which is formulated in terms of the cubes $\mQ_{S,h(S)}$.

Lastly we continue the study of the connection between s-extremal families and so-called Gr\"obner bases, initiated by M\'esz\'aros and R\'onyai~\cite{RM11}. Since the result requires some more definitions we will only state it after introducing Gr\"obner bases in Section~\ref{sec::preliminaries_groebner}.


\section{An approach to the elimination conjecture}

\subsection{Proof of Proposition~\ref{prop::s-extremal} and Lemma \ref{unique lemma}}

\begin{proof}[Proof of Proposition~\ref{prop::s-extremal}]
Suppose $\mF=\mF(\mS,h)$ is s-extremal with Sh$(\mF) = \mH(\mS)$. Then, by the s-extremality we have $|\mF| = | \text{Sh}(\mF)| = \left| \mH(\mS) \right|$ as claimed. 

\medskip

To see the other direction, suppose that $ |\mathcal{H}(\mathcal{S})|=|\mathcal{F}(\mathcal{S},h)| $. 

\begin{claim}\label{fshsh}
Let $\mS \subseteq 2^{[n]}$ be a Sperner family and $h:\mathcal{S}\rightarrow 2^{[n]}$ a function such that $h(S)\subseteq S$ for every $S\in \mathcal{S}$. Then $\text{Sh}(\mF(\mS,h))\subseteq \mH(\mS)$.
\end{claim}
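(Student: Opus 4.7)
The plan is to prove the contrapositive: if $T \subseteq [n]$ does not lie in $\mathcal{H}(\mathcal{S})$, then $T$ is not shattered by $\mathcal{F}(\mathcal{S},h)$. By the definition $\mathcal{H}(\mathcal{S}) = 2^{[n]} \setminus \bigcup_{S \in \mathcal{S}}\mathcal{P}_S$, the assumption $T \notin \mathcal{H}(\mathcal{S})$ means $T \in \mathcal{P}_S$ for some $S \in \mathcal{S}$, i.e.\ there exists $S \in \mathcal{S}$ with $S \subseteq T$. Fix such an $S$ and write $H = h(S)$; note $H \subseteq S \subseteq T$.

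To show $T$ is not shattered by $\mathcal{F}(\mathcal{S},h)$, I would exhibit a subset of $T$ that cannot be realized as $F \cap T$ for any $F \in \mathcal{F}(\mathcal{S},h)$. The natural candidate is $H$ itself. Suppose for contradiction that some $F \in \mathcal{F}(\mathcal{S},h)$ satisfied $F \cap T = H$. Intersecting both sides with $S$ and using $S \subseteq T$ and $H \subseteq S$ gives
\begin{equation*}
F \cap S \;=\; F \cap T \cap S \;=\; H \cap S \;=\; H \;=\; h(S),
\end{equation*}
so $F \in H + 2^{[n]\setminus S} = \mathcal{Q}_{S,h(S)}$. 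But then $F \in \bigcup_{S' \in \mathcal{S}} \mathcal{Q}_{S',h(S')}$, contradicting $F \in \mathcal{F}(\mathcal{S},h) = 2^{[n]} \setminus \bigcup_{S' \in \mathcal{S}} \mathcal{Q}_{S',h(S')}$. Hence no such $F$ exists, so $T \notin \text{Sh}(\mathcal{F}(\mathcal{S},h))$, completing the contrapositive and therefore the claim.

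There is really no obstacle here; the whole content of the argument is the observation that the cube $\mathcal{Q}_{S,h(S)}$ is designed precisely to forbid the intersection pattern $h(S)$ on any superset of $S$. The condition $h(S) \subseteq S$ is used in the single step $H \cap S = H$, and the fact that $\mathcal{S}$ is Sperner is not needed for this direction: any collection $\mathcal{S}$ would give the same inclusion.
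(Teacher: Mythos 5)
Your proof is correct and follows essentially the same route as the paper: the paper observes that no $F\in\mF(\mS,h)$ satisfies $F\cap S=h(S)$, so $S$ and hence every superset of $S$ fails to be shattered, while you simply unpack that monotonicity step explicitly by exhibiting $h(S)$ as the unrealizable trace on any $T\supseteq S$. Your closing remarks (where the hypothesis $h(S)\subseteq S$ is used, and that the Sperner condition is not needed) are accurate as well.
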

\begin{proof}
Note that by the definition of $\mF=\mF(\mS,h)$, for every $S \in \mS$ there does not exist $F \in \mF$ such that $F\cap S = h(S)$ and so $S \notin \text{Sh}(\mF)$. In particular, no superset of $S$ is shattered by $\mF$. Therefore, $\text{Sh}(\mF) \subseteq 2^{[n]}\setminus \text{Up}(\mS) = \mH(\mS)$. 
\end{proof}

In our case this in particular means  that $|\text{Sh}(\mF(\mS,h))| \leq |\mH(\mS)| =|\mF(\mS,h)|$. However, the reverse inequality always holds by the Sauer-Shelah lemma and so $\mF(\mS,h)$ is s-extremal and $\text{Sh}(\mF(\mS,h)) = \mH(\mS)$ as claimed.
\end{proof}
Although this result is rather easy to state and prove, it does offer a new perspective to s-extremal set systems, because it allows one to construct an s-extremal set system from a Sperner family with an appropriately defined function $h$. Given a Sperner family and a function $h$, one checks whether Equation~\eqref{s-extremal_condition} is satisfied and if it is, the resulting set system is s-extremal. 

\begin{proof}[Proof of Lemma~\ref{unique lemma}]
First note that for every up-set there is a unique Sperner family, namely the collection of all minimal elements, that generates it. In particular, using this fact for $2^{[n]}\setminus \text{Sh}(\mF)$ we get that $\mS$, if it exists, is really unique and it is the collection of all minimal sets not shattered by $\mF$. Fix this $\mS$ a keep in mind that we have $\text{Sh}(\mF)=\mH(\mS)$.

\medskip

In \cite{M10} it was proven that if $\mF$ is s-extremal and $S$ is a minimal set not shattered by $\mF$ then the subset of $S$ that cannot be obtained as the intersection with some member of $\mF$ is unique. Accordingly fix $h$ to be the function which maps every $S\in \mS$ to this unique subset. Now this means that for every $S\in \mS$ we have $\mF\subseteq 2^{[n]}\setminus \mQ_{S,h(S)}$, in particular $\mF\subseteq \mF(\mS,h)$. However then by the Sauer-Shelah lemma, Claim~\ref{fshsh}, the choice of $\mS$ and the s-extremality of $\mF$ we have $|\mF|\leq |\mF(\mS,h)|\leq |\text{Sh}(\mF(\mS,h))|\leq |\mH(\mS)|=|\text{Sh}(\mF)|=|\mF|$, implying that actually $\mF= \mF(\mS,h)$.

\medskip

Now suppose that there is another function $h'$ for which $\mF=\mF(\mS,h')$ and take an arbitrary $S\in \mS$. By construction $\mF=\mF(\mS,h')=2^{[n]}\setminus \bigcup_{S'\in \mathcal{S}} \mathcal{Q}_{S',h'(S)}\subseteq 2^{[n]}\setminus \mathcal{Q}_{S,h'(S)}$, meaning that there is no set $F\in \mF$ with $F\cap S_0=h'(S_0)$. However by our earlier remark we know that this subset of $S_0$ is unique so we must have $h'(S)=h(S)$ for every $S\in \mS$.
\end{proof}

In order to try to justify our approach given by Proposition~\ref{prop::s-extremal} further, we remark that it has a connection to the following generalisation of the Sauer inequality, to which our attention was brought by Chornomaz \cite{C16} and which we implicitly already proved in the proof of Proposition~\ref{prop::s-extremal}. For the sake of completeness below we shortly repeat the argument.

\begin{proposition}\label{genSauer}
Let $\mathcal{S}\subseteq 2^{[n]}$ be a Sperner family and $\mathcal{F}\subseteq 2^{[n]}$ a set system that shatters no element of $\mathcal{S}$. Then
\begin{equation*}
|\mathcal{F}|\leq |\mathcal{H}(\mathcal{S})|.
\end{equation*}
\end{proposition}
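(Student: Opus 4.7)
The plan is to reduce this to material already assembled in the proof of Proposition~\ref{prop::s-extremal}: produce a family of the form $\mF(\mS,h)$ that sandwiches $\mF$, and then chain the Sauer--Shelah bound with Claim~\ref{fshsh}.

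First, I would use the hypothesis that $\mF$ shatters no element of $\mS$ to build an auxiliary function $h\colon \mS \to 2^{[n]}$. By definition, for every $S \in \mS$ there exists some subset of $S$ that is not of the form $F \cap S$ for any $F \in \mF$; choose one such subset and call it $h(S)$. Then for each $S \in \mS$ no member of $\mF$ lies in $\mQ_{S,h(S)} = h(S) + 2^{[n]\setminus S}$, which gives the inclusion
\begin{equation*}
\mF \;\subseteq\; 2^{[n]} \setminus \bigcup_{S \in \mS} \mQ_{S,h(S)} \;=\; \mF(\mS,h).
\end{equation*}

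Now I would chain three known facts. By the above inclusion, $|\mF| \leq |\mF(\mS,h)|$. By Proposition~\ref{S-Sh-l} applied to $\mF(\mS,h)$, $|\mF(\mS,h)| \leq |\mathrm{Sh}(\mF(\mS,h))|$. By Claim~\ref{fshsh}, $\mathrm{Sh}(\mF(\mS,h)) \subseteq \mH(\mS)$, so $|\mathrm{Sh}(\mF(\mS,h))| \leq |\mH(\mS)|$. Concatenating,
\begin{equation*}
|\mF| \;\leq\; |\mF(\mS,h)| \;\leq\; |\mathrm{Sh}(\mF(\mS,h))| \;\leq\; |\mH(\mS)|,
\end{equation*}
which is the required bound.

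There is no real obstacle here: the entire argument is a bookkeeping of what was already shown in the proof of Proposition~\ref{prop::s-extremal}. The only point that deserves explicit verification is the first step, namely that the hypothesis "$\mF$ shatters no $S \in \mS$" allows us to pick the values $h(S)$, but this is immediate from the definition of shattering. Note also that the argument does not use the Sperner property of $\mS$ at all; it would go through for any family, which is consistent with the remark in the paper that the Sperner assumption is made only for simplicity.
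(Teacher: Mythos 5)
Your proof is correct, but it takes a small detour that the paper avoids. The paper's argument applies the Sauer--Shelah lemma directly to $\mF$: since $\mF$ shatters no $S\in\mS$, it shatters no superset of any such $S$ either, so $\text{Sh}(\mF)\subseteq 2^{[n]}\setminus \text{Up}(\mS)=\mH(\mS)$, and hence $|\mF|\leq|\text{Sh}(\mF)|\leq|\mH(\mS)|$. You instead first embed $\mF$ into an auxiliary family $\mF(\mS,h)$ (choosing, for each $S\in\mS$, a witness $h(S)\subseteq S$ of non-shattering) and then apply Sauer--Shelah together with Claim~\ref{fshsh} to that larger family; note that Claim~\ref{fshsh} is itself exactly the monotonicity observation the paper uses, just stated for $\mF(\mS,h)$. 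Both routes rest on the same two facts, so for the proposition itself your extra construction is pure overhead. What it does buy you is the slightly stronger statement $|\mF|\leq|\mF(\mS,h)|\leq|\mH(\mS)|$, i.e.\ that every family shattering no element of $\mS$ sits inside some $\mF(\mS,h)$ --- an embedding the paper only exploits later, in the proof of Lemma~\ref{unique lemma}. Your closing remark that the Sperner hypothesis is not actually used is accurate and applies equally to the paper's proof.
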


\begin{proof} For the proof just note that if $\mathcal{F}$ shatters no element of $\mathcal{S}$, then it shatters no set from $\text{Up}(\mathcal{S})$ either, and so $\text{Sh}(\mathcal{F})\subseteq 2^{[n]}\setminus \text{Up}(\mathcal{S})$. Accordingly by the Sauer-Shelah lemma
\begin{equation*}
|\mathcal{F}|\leq |\text{Sh}(\mathcal{F})|\leq |2^{[n]}\setminus \text{Up}(\mathcal{S})|=|\mathcal{H}(\mathcal{S})|
\end{equation*}
as wanted.
\end{proof}

For a Sperner family $\mathcal{S}\subseteq 2^{[n]}$ let us define a family $\mathcal{F}\subseteq 2^{[n]}$ shattering no element of $\mathcal{S}$ and satisfying $|\mathcal{F}|=|\mathcal{H}(\mathcal{S})|$ to be \emph{$\mathcal{S}$-extremal}. Note that the original Sauer inequality can be recovered by setting $\mathcal{S}={[n]\choose k}$, and ${[n]\choose k}$-extremal families are just maximum classes. An interesting property here is that if we let $\mathcal{S}$ to vary then we end up with s-extremality.

\begin{proposition}
$\mathcal{F}\subseteq 2^{[n]}$ is s-extremal if and only if there exists a Sperner family $\mathcal{S}\subseteq 2^{ [n]}$ such that $\mathcal{F}$ is $\mathcal{S}$-extremal.
\end{proposition}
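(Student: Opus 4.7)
The plan is to prove both directions using tools already in place. The backward direction is essentially immediate from Proposition~\ref{genSauer}: if $\mathcal{F}$ is $\mathcal{S}$-extremal for some Sperner family $\mathcal{S}$, then the proof of Proposition~\ref{genSauer} already shows $\text{Sh}(\mathcal{F})\subseteq \mathcal{H}(\mathcal{S})$, so that combined with the Sauer-Shelah lemma we get
\begin{equation*}
|\mathcal{F}| \leq |\text{Sh}(\mathcal{F})| \leq |\mathcal{H}(\mathcal{S})| = |\mathcal{F}|,
\end{equation*}
forcing equality throughout, hence s-extremality.

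For the forward direction, suppose $\mathcal{F}\subseteq 2^{[n]}$ is s-extremal. The natural candidate is to let $\mathcal{S}$ be the collection of minimal sets of $[n]$ that are \emph{not} shattered by $\mathcal{F}$. The first step is to observe that the family of non-shattered sets is an up-set: if $S$ is not shattered and $S\subseteq T$, then the argument used in Claim~\ref{fshsh} (or a direct check: any $A\subseteq S$ obtained as $F\cap T$ would give $F\cap S = A\cap S$, so shattering of $T$ forces shattering of $S$) shows $T$ is not shattered either. Consequently $\mathcal{S}$ is a well-defined Sperner family with $\text{Up}(\mathcal{S}) = 2^{[n]}\setminus \text{Sh}(\mathcal{F})$, equivalently $\mathcal{H}(\mathcal{S}) = \text{Sh}(\mathcal{F})$.

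It then remains to check both $\mathcal{S}$-extremality conditions. By construction every element of $\mathcal{S}$ is non-shattered, so $\mathcal{F}$ shatters no element of $\mathcal{S}$. Moreover, using s-extremality of $\mathcal{F}$ together with the identity above,
\begin{equation*}
|\mathcal{F}| = |\text{Sh}(\mathcal{F})| = |\mathcal{H}(\mathcal{S})|,
\end{equation*}
which gives exactly the size condition defining $\mathcal{S}$-extremality. Alternatively, one may invoke Lemma~\ref{unique lemma} directly to produce the same $\mathcal{S}$ as the unique Sperner family satisfying $\text{Sh}(\mathcal{F}) = \mathcal{H}(\mathcal{S})$, bypassing the up-set observation.

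There is no real obstacle here; the only mildly nontrivial point is the lemma that the non-shattered sets form an up-set (equivalently, shattered sets form a down-set), but this is standard and short. The rest is bookkeeping with the Sauer-Shelah lemma and the definitions.
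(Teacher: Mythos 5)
Your proof is correct, and both directions reach the same conclusion as the paper by the same overall strategy (same choice of $\mathcal{S}$, same use of Proposition~\ref{genSauer} and the Sauer-Shelah lemma for the converse). The one genuine difference is in the forward direction: the paper obtains the identity $\text{Sh}(\mathcal{F})=\mathcal{H}(\mathcal{S})$ by invoking Lemma~\ref{unique lemma}, whose proof in turn rests on a nontrivial result from \cite{M10} (the uniqueness of the missing intersection $h(S)$) in order to also produce the representation $\mathcal{F}=\mathcal{F}(\mathcal{S},h)$; you instead derive $\mathcal{H}(\mathcal{S})=\text{Sh}(\mathcal{F})$ directly from the elementary fact that shattered sets form a down-set, so the non-shattered sets form an up-set whose minimal elements are exactly $\mathcal{S}$. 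This is a real (if small) simplification: $\mathcal{S}$-extremality only requires the Sperner family and the cardinality identity, not the function $h$, so the heavier machinery is unnecessary here, and your argument even shows that for an arbitrary family $\mathcal{F}$ one always has $\mathcal{H}(\mathcal{S})=\text{Sh}(\mathcal{F})$ for this canonical $\mathcal{S}$, making the equivalence with s-extremality immediate. You correctly flag the down-set lemma as the only point needing verification, and your parenthetical argument for it is sound.
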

\begin{proof}
First suppose that $\mathcal{F}\subseteq 2^{[n]}$ is s-extremal. By Lemma~\ref{unique lemma} there is a unique Sperner family $\mS$, namely the minimal sets not shattered by $\mF$, and a function $h:\mathcal{S}\rightarrow 2^{[n]}$ with $h(S)\subseteq S$ for every $S\in \mathcal{S}$ such that $\mF=\mF(\mS,h)$ and $\text{Sh}(\mathcal{F})=\mathcal{H}(\mathcal{S})$. By the s-extremality this implies $|\mathcal{F}|=|\text{Sh}(\mF)|=|\mathcal{H}(\mathcal{S})|$.  On the other hand by the choice of $\mS$  its elements are not shattered by $\mathcal{F}$ hence it is $\mathcal{S}$-extremal.

\medskip

Now suppose that $\mathcal{F}$ is $\mathcal{S}$-extremal for some Sperner family $\mathcal{S}\subseteq 2^{[n]}$. From the proof of Proposition~\ref{genSauer} if follows that this is possible only if $\text{Sh}(\mathcal{F})=\mathcal{H}(\mathcal{S})$. However this means that $|\text{Sh}(\mathcal{F})|=|\mathcal{H}(\mathcal{S})|=|\mathcal{F}|$ and so $\mathcal{F}$ is s-extremal.
\end{proof}

\subsection{Analysing the equation $|\mH(\mS)| = |\mF(\mS,h)|$}\label{equiv equation}

Let us now get back to families of the form $\mathcal{F}(\mathcal{S},h)$.  For a Sperner family $\mathcal{S}=\{S_1,\dots,S_N\}$ and a function $h:\mS\rightarrow 2^{[n]}$ with $h(S)\subseteq S$ for every $S\in \mS$, to simplify notation, put $h(S_i)=H_i$. To analyse \eqref{s-extremal_condition} in Proposition~\ref{prop::s-extremal} first note that it holds if and only if $2^{[n]}\setminus \mH(\mS)=\text{Up}(\mathcal{S})=\bigcup_{i=1}^N\mathcal{P}_{S_i}$ and $2^{[n]}\setminus \mathcal{F}(\mathcal{S},h)=\bigcup_{i=1}^N\mathcal{Q}_{S_i,H_i}$ have the same size. To study this we will use the inclusion-exclusion formula. For this note that for every $1\leq i < j \leq N$
\begin{itemize}
	\item[(i)]  $\mathcal{P}_{S_i} \cap \mathcal{P}_{S_j} = \mathcal{P}_{S_i \cup S_j}$ and 
	 \item[(ii)] $ \mathcal{Q}_{S_i,H_i}\cap \mathcal{Q}_{S_j,H_j} = \begin{cases}
\mathcal{Q}_{S_i\cup S_j, H_i \cup H_j}& \text{ if } S_i \cap H_j = S_j\cap H_i\\
\emptyset &\text{ otherwise} 
\end{cases}$.
\end{itemize}
For $I\subseteq [N]$ put $S_I=\bigcup_{i\in I}S_i$ and $H_I=\bigcup_{i\in I}H_i$. Then in particular we have that
\begin{equation*}
\left|\bigcap_{i\in I}\mathcal{P}_{S_i}\right|=\left|\mathcal{P}_{S_I}\right|=\left |\mathcal{Q}_{S_I,H_I}\right|=\left|\bigcap_{i\in I}\mathcal{Q}_{S_i,H_i}\right|
\end{equation*}
whenever $\bigcap_{i\in I}\mathcal{Q}_{S_i,H_i}$ is non-empty, which happens exactly if for every $i\neq j\in I$ we have $ S_i \cap H_j = S_j\cap H_i$. For $1\leq i,j\leq N$ let $\mathbb{I}_{i,j}$ be the indicator of the event $ S_i \cap H_j = S_j\cap H_i$, i.e. it is $1$ if the equality is satisfied and $0$ otherwise. Now the inclusion-exclusion formula gives that we have $|\mathcal{H}(\mathcal{S})|=|\mathcal{F}(\mathcal{S},h)|$ if and only if
\begin{equation*}
\sum_{I\subseteq [N]}(-1)^{|I|+1}\left|\bigcap_{i\in I}\mathcal{P}_{S_i}\right|=\sum_{I\subseteq [N]}(-1)^{|I|+1}\left|\bigcap_{i\in I}\mathcal{Q}_{S_i,H_i}\right|=\sum_{I\subseteq [N]}(-1)^{|I|+1}\left(\prod_{i\neq j\in I}\mathbb{I}_{i,j}\right)\left|\bigcap_{i\in I}\mathcal{P}_{S_i}\right|.
\end{equation*}
This latter equation can also be rewritten as 
\begin{equation*}
\sum_{I\subseteq [N]}(-1)^{|I|+1}\left(1-\prod_{i\neq j\in I}\mathbb{I}_{i,j}\right)\left|\bigcap_{i\in I}\mathcal{P}_{S_i}\right|=0.
\end{equation*}

\subsection{Outline of the new approach}

Before we prove Theorem~\ref{thm::elimination_main} we would like to begin with a high overview of our approach to the elimination conjecture. 
\begin{itemize}
	\item[(i)] Start with an s-extremal family $\mF$. Then we know that there is a unique Sperner family $\mS \subseteq 2^{[n]}$ and a unique function $h: \mS \rightarrow 2^{[n]}$ with $h(S)\subseteq S$ for every $S\in \mS$ such that $\mF=\mF(\mS,h)$ and $\text{Sh}(\mF)=\mH(\mS)$.
	\item[(ii)] Choose a suitable set $S_0 \in \mS$ and replace it with sets from $\{ S_0\cup \{v\}\; : v\in [n]\setminus S_0  \}$ to obtain a Sperner family $\mS'$ with $\mH(\mS') = \mH(\mS) \cup \{S_0\}$. Note that it might be the case that $\mS' = \mS \setminus\{S_0 \}$.
	\item[(iii)] Extend the function $h$ suitably from $\mS$ to $\mS'$ and consider the resulting set system $\mF' = \mF(\mS',h)$. Note that by the Sauer-Shelah lemma, Claim~\ref{fshsh} and the choice of $\mS'$ we automatically have $|\mF'| \leq |\text{Sh}(\mF')|\leq |\mH(\mS')|=|\mH(\mS)\cup\{S_0\}|=|\mH(\mS)|+1=|\text{Sh}(\mF)|+1=|\mF| +1$.
	\item[(iv)] Prove that $\mF \subseteq \mF'$ and $|\mF'| = |\mF| +1$.
\end{itemize}
As we will see after the proof of Theorem~\ref{thm::elimination_main}, one cannot simply take any $S_0 \in \mS$. Another issue is, that it is not clear how to extend the function $h$. We think that for $v\in [n]\setminus S_0$ a natural choice would be to set $h(S_0 \cup \{ v\})$ equal to either $h(S_0)$ or to $h(S_0) \cup \{v\}$.

\subsection{Proof of Theorem~\ref{thm::elimination_main}}

The following proposition establishes that given any Sperner family $\mS\subseteq 2^{[n]}$ and a fixed set $A \subseteq [n]$ the family $\mF(\mS,h_A)$ is s-extremal. 

\begin{proposition}\label{fixedAlemma}
Let $\mathcal{S}\subseteq 2^{[n]}$ be a Sperner family and $A \subseteq [n]$ a fixed set. Furthermore let $h_A:\mathcal{S}\rightarrow 2^{[n]}$ be defined as $h_A(S)=S\cap A$. Then $\mF=\mathcal{F}(\mathcal{S},h_A)$ is s-extremal and $\text{Sh}(\mathcal{F})=\mathcal{H}(\mathcal{S})$.
\end{proposition}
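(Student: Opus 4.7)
The plan is to invoke Proposition~\ref{prop::s-extremal}: since the identity $\text{Sh}(\mathcal{F})=\mathcal{H}(\mathcal{S})$ comes for free from the equality of cardinalities in that proposition, it suffices to establish
\[
 |\mathcal{F}(\mathcal{S},h_A)| = |\mathcal{H}(\mathcal{S})|,
\]
or equivalently, that the ``forbidden'' sides have equal size:
\[
\Bigl|\bigcup_{i=1}^N \mathcal{P}_{S_i}\Bigr| = \Bigl|\bigcup_{i=1}^N \mathcal{Q}_{S_i,H_i}\Bigr|,
\]
where $\mathcal{S}=\{S_1,\dots,S_N\}$ and $H_i=h_A(S_i)=S_i\cap A$.

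The strategy is to apply the inclusion–exclusion machinery set up in Section~\ref{equiv equation} and to verify the compatibility indicator $\mathbb{I}_{i,j}$ is identically $1$ under the choice $h=h_A$. Concretely, for any $i\neq j$ one computes
\[
S_i\cap H_j = S_i\cap (S_j\cap A)= S_i\cap S_j\cap A = S_j\cap (S_i\cap A)= S_j\cap H_i,
\]
so $\mathbb{I}_{i,j}=1$. This is the content of the proposition; everything else is bookkeeping.

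Given this, the key step is to argue that for every non-empty $I\subseteq[N]$ the intersection $\bigcap_{i\in I}\mathcal{Q}_{S_i,H_i}$ is non-empty and equals $\mathcal{Q}_{S_I,H_I}$. The forward inclusion (``if $F\cap S_i=H_i$ for all $i\in I$ then $F\cap S_I=H_I$'') is immediate by taking unions. For the reverse, if $F\cap S_I=H_I$ and $i\in I$, then
\[
F\cap S_i = F\cap S_I \cap S_i = H_I\cap S_i = \bigcup_{j\in I}(S_j\cap A\cap S_i)= A\cap S_i\cap S_I = A\cap S_i = H_i,
\]
using $S_i\subseteq S_I$. Hence $\bigl|\bigcap_{i\in I}\mathcal{Q}_{S_i,H_i}\bigr| = |\mathcal{Q}_{S_I,H_I}| = 2^{n-|S_I|} = |\mathcal{P}_{S_I}| = \bigl|\bigcap_{i\in I}\mathcal{P}_{S_i}\bigr|$, where the last equality uses property~(i) from Section~\ref{equiv equation}.

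Finally, plugging these term-by-term equalities into the inclusion–exclusion expansion for the two unions yields $|\bigcup \mathcal{P}_{S_i}| = |\bigcup \mathcal{Q}_{S_i,H_i}|$, which is exactly Equation~\eqref{s-extremal_condition}. Applying Proposition~\ref{prop::s-extremal} then delivers both the s-extremality of $\mathcal{F}(\mathcal{S},h_A)$ and the identity $\text{Sh}(\mathcal{F})=\mathcal{H}(\mathcal{S})$. The only place where anything has to be ``proved'' is the one-line verification that $S_i\cap H_j = S_j\cap H_i$, and there is no real obstacle beyond that; the proof is essentially a direct consequence of the universal compatibility that the uniform choice $h_A(S)=S\cap A$ forces.
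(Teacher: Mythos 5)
Your proof is correct and follows essentially the same route as the paper: verify that the uniform choice $h_A$ forces $S_i\cap H_j=S_j\cap H_i$ for all pairs (so $\mathbb{I}_{i,j}=1$), feed this into the inclusion--exclusion setup of Subsection~\ref{equiv equation}, and conclude via Proposition~\ref{prop::s-extremal}. The only difference is that you re-derive directly the identity $\bigcap_{i\in I}\mathcal{Q}_{S_i,H_i}=\mathcal{Q}_{S_I,H_I}$ for the special case $h=h_A$, whereas the paper simply cites the general criterion already established in Subsection~\ref{equiv equation}; this makes your write-up slightly more self-contained but does not change the argument.
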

\begin{proof}
As before let $\mathcal{S} = \{S_1,...,S_N \}$ and for $i\in [N]$ put $H_i= h_A(S_i)=S_i\cap A$. For the proof only note that in this case, for every $1 \leq i,j \leq N$ we have \[S_j \cap  H_i = S_j \cap S_i\cap A =S_i \cap S_j\cap A= S_i \cap H_j,\]  
i.e. $\mathbb{I}_{i,j}=1$. In this case $1-\prod_{i\neq j\in I}\mathbb{I}_{i,j}=0$ for every $I\subseteq [N]$, and so 
\[ \sum_{I\subseteq [N]}(-1)^{|I|+1}\left(1-\prod_{i\neq j\in I}\mathbb{I}_{i,j}\right)\left|\bigcap_{i\in I}\mathcal{P}_{S_i}\right|=0.\] 
As we have seen in Subsection~\ref{equiv equation}, equivalently this means that $|\mathcal{H}(\mathcal{S})|=|\mathcal{F}(\mathcal{S},h)|$, and so by Proposition~\ref{prop::s-extremal} $\mF=\mathcal{F}(\mathcal{S},h_A)$ is s-extremal and $\text{Sh}(\mathcal{F})=\mathcal{H}(\mathcal{S})$.
\end{proof}

 A natural first question is that perhaps the converse is also true, i.e. every s-extremal family is of this form. Unfortunately this is not the case, as shown by the following example.

\begin{example}\label{counterexmpl}
	
Let $n=3$ and $\mathcal{S} = \{ S_1, S_2, S_3\}$, where $S_1 = \{1,2 \}$, $S_2 = \{ 1,3 \}$  and $S_3= \{ 2,3 \} $. Furthermore let $h(S_1)=H_1 = \{ 1 \}$, $h(S_2)=H_2 = \emptyset $ and $h(S_3)=H_3 = \emptyset $. Then
\begin{equation*}
\begin{array}{lll}
\mathcal{P}_1=\mathcal{P}_{S_1}= \{ \{1,2\}, \{1,2,3\} \},& \mathcal{P}_2=\mathcal{P}_{S_2} = \{ \{1,3\}, \{1,2,3\} \},& \mathcal{P}_3=\mathcal{P}_{S_3} = \{ \{2,3\},\{1,2,3\} \},\\
\mathcal{Q}_1=\mathcal{Q}_{S_1,H_1}=\{\{1\},\{1,3\}\},& \mathcal{Q}_2=\mathcal{Q}_{S_2,H_2}=\{\emptyset,\{2\}\},& \mathcal{Q}_3=\mathcal{Q}_{S_3,H_3}=\{\emptyset,\{1\}\}
\end{array}
\end{equation*}
and so 
\begin{equation*}
\mathcal{F}(\mathcal{S},h)= 2^{[3]}\setminus \left(\mathcal{Q}_1 \cup \mathcal{Q}_2 \cup \mathcal{Q}_3 \right) = \{\{ 3\}, \{1,2\}, \{2,3\}, \{1,2,3\} \}
\end{equation*}
and 
\begin{equation*}
\mathcal{H}(\mathcal{S})=2^{[3]}\setminus (\mathcal{P}_1 \cup \mathcal{P}_2 \cup \mathcal{P}_3) =\{\emptyset,\{1\},\{2\},\{3\}\}.
\end{equation*}
As both have size $4$, by Proposition~\ref{prop::s-extremal} $\mathcal{F}(\mathcal{S},h)$ is s-extremal and $\text{Sh}(\mathcal{F}(\mathcal{S},h))=\mathcal{H}(\mathcal{S})$. However it is easily seen that there is no $A\subseteq [3]$ such that $h=h_A$ would hold. 
\end{example}
We are now in a position to show that families of the form $\mF(\mS,h_A)$ satisfy Conjecture~\ref{conj}, i.e. to prove Theorem~\ref{thm::elimination_main}.

\begin{proof}[Proof of Theorem~\ref{thm::elimination_main}]
Recall that by Proposition~\ref{fixedAlemma} $\mathcal{F}=\mathcal{F}(\mathcal{S},h_A)$ is s-extremal and $\text{Sh}(\mathcal{F})=\mathcal{H}(\mathcal{S})$. Pick an arbitrary $S_0 \in \mathcal{S}$ and let $H_0=S_0\cap A$. Then there exists a unique (possibly empty) family $\{S_1',...,S_k' \} \subseteq \{ S_0 \cup \{v\}\; : \; v\in [n]\setminus S_0 \}$ such that $\mathcal{S} ' = \mathcal{S} \setminus \{S_0\} \cup \{ S_1',...,S_k'\}$ is again a Sperner familiy and $\mathcal{H(\mathcal{S'})}=\mathcal{H}(\mathcal{S})\cup \{S_0 \}$. For $i\in [k]$ let $H_i'=h_A(S_i')=S_i'\cap A$ and consider the family $\mathcal{F}'=\mathcal{F}(\mathcal{S}',h_A)$. Again, by Proposition~\ref{fixedAlemma}, $\mathcal{F}'$ is s-extremal and $\text{Sh}(\mathcal{F}')=\mathcal{H}(\mathcal{S}')$. In particular we have $|\mF'|=|\text{Sh}(\mathcal{F}')|=|\mathcal{H(\mathcal{S'})}|=|\mathcal{H}(\mathcal{S})\cup \{S_0 \}|=|\mathcal{H}(\mathcal{S})|+1=|\text{Sh}(\mathcal{F})|+1= |\mathcal{F}|+1$. Accordingly all that remains to be shown to prove the theorem is that $\mathcal{F} \subseteq \mathcal{F}'$, since in that case the unique set $F$ in $\mathcal{F}'\setminus \mathcal{F}$ is a good choice. To see this, first note that by the choice of $S_i'$ and $H_i'$ for $i\in [k]$ we have $\mathcal{Q}_{S_i', H_i'} \subseteq \mathcal{Q}_{S_0,H_0}$ and hence 
\begin{equation*}
\bigcup_{i=1}^k \mathcal{Q}_{S_i', H_i'} \subseteq \mathcal{Q}_{S_0,H_0}.
\end{equation*}
However in this case 
\begin{equation*}
\mathcal{F}  = \Bigg(2^{[n]}\setminus \bigcup_{\begin{footnotesize}
\substack{S \in \mathcal{S} \\ S \neq S_0}
\end{footnotesize}} \mathcal{Q}_{S,h_A(S)} \Bigg) \setminus \mathcal{Q}_{S_0,H_0}
 \subseteq \Bigg(2^{[n]}\setminus \bigcup_{\begin{footnotesize}
\substack{S \in \mathcal{S} \\ S \neq S_0}
\end{footnotesize}} \mathcal{Q}_{S,h_A(S)}\Bigg)\setminus \bigcup_{i=1}^k \mathcal{Q}_{S_i',H_i'} = 2^{[n]}\setminus \bigcup_{S \in \mathcal{S}'} \mathcal{Q}_{S,h_A(S)} =\mathcal{F}',
\end{equation*}
as desired.
\end{proof}

Theorem~\ref{thm::elimination_main} solves only a further special case of Conjecture \ref{conj}, so the conjecture remains wide open in general. However the approach presented offers a possible way to tackle it. 

Note that the set $S_0$ in the proof was arbitrary. In general, as already mentioned after the outline of the approach, one cannot take any $S_0 \in \mS$.

\addtocounter{theorem}{-1}

\begin{example}[continued]
Consider the set system introduced earlier in Example~\ref{counterexmpl}. If we take any $S_0\in \mathcal{S}$ then already with $\mathcal{S}'=\mathcal{S}\setminus \{S_0\}$ we have $\mathcal{H}(\mathcal{S}')=\mathcal{H}(\mathcal{S})\cup\{S_0\}$. However if we were to choose $S_0=S_3=\{2,3\}$, then the resulting $\mathcal{F}'$ would be the same as $\mathcal{F}$. In the special case, when we have $h=h_A$ for some $A\subseteq [n]$, this is not possible by the s-extremality of $\mathcal{F}'$, which is guaranteed by Proposition~\ref{fixedAlemma}. Here we remark that $\mathcal{F}=\mathcal{F}'$ does not contradict with the uniqueness of $\mathcal{S}$ and $h$, as for $\mathcal{S}'$ we have that $\text{Sh}(\mathcal{F}')=\text{Sh}(\mathcal{F})\subsetneq \mathcal{H}(\mathcal{S}')$. In the above example for instance $\text{Sh}(\mathcal{F})=\mathcal{H}(\mathcal{S})=\{\emptyset,\{1\},\{2\},\{3\}\}\subsetneq \{\emptyset,\{1\},\{2\},\{3\},\{2,3\}\}=\mathcal{H}(\mathcal{S'})$. Let us also mention that on the other hand $S_1$ and $S_2$ are both good choices for $S_0$.
\end{example}

Accordingly the main issue here is to rule out the possibility $\mathcal{F}=\mathcal{F}'$ by choosing $S_0$ and the new values for $h$ carefully.  Note that to prove the conjecture we need only one good instance.


\section{Small Sperner families and an equivalent form of Conjecutre~\ref{conj}}\label{sec::small_sperner}

In this section we will prove that Conjecture~\ref{conj} holds for those s-extremal families whose corresponding Sperner family has size at most 4. To do so, we first state an equivalent version of Conjecture~\ref{conj}.

\begin{conjecture}\label{conj::equiv}
Let $\mS \subseteq 2^{[n]}$ be a Sperner family and $h:\mathcal{S}\rightarrow 2^{[n]}$ a function such that $h(S)\subseteq S$ for every $S\in \mathcal{S}$. Then there exists $S_0 \in \mS$ such that
\begin{equation*}
\mQ_{S_0,h(S_0)} \not\subseteq \bigcup_{S \in \mathcal{S}\setminus\{S_0\}} \mQ_{S,h(S)}.
\end{equation*}
\end{conjecture}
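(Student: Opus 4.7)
The plan is to attempt Conjecture~\ref{conj::equiv} by a careful choice of $S_0\in\mS$ combined with induction. The base cases $|\mS|\le 2$ are immediate: for $|\mS|=1$ any $S_0$ works since $\mQ_{S_0,h(S_0)}\ne\varnothing$, and for $|\mS|=2$ the Sperner condition $S_2\not\subseteq S_1$ directly forces $\mQ_{S_1,h(S_1)}\not\subseteq\mQ_{S_2,h(S_2)}$, because containment of subcubes would require the \emph{reverse} containment of defining sets. For $|\mS|=3$ a direct dimension count together with the compatibility criterion (ii) in Subsection~\ref{equiv equation} already suffices: if each of the three subcubes were covered by the union of the other two, then the bound $|\mQ_i|\le|\mQ_i\cap\mQ_j|+|\mQ_i\cap\mQ_k|$ gives $1\le 2^{-|S_j\setminus S_i|}+2^{-|S_k\setminus S_i|}$, forcing $|S_i\setminus S_j|=1$ for all $i\ne j$; writing $S_2,S_3$ as $S_1$ with single-element swaps then yields structural equalities that are mutually incompatible.

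For the inductive step I would pick $S_0\in\mS$ of maximum cardinality, so that $\mQ_{S_0,h(S_0)}$ has smallest dimension, identify it with the hypercube $2^{[n]\setminus S_0}$ via $F\mapsto F\setminus S_0$, and seek a vertex missed by every other subcube. By the compatibility criterion, only those $S\in\mS\setminus\{S_0\}$ with $h(S)\cap S_0=h(S_0)\cap S$ contribute, and each such contribution is a subcube of $2^{[n]\setminus S_0}$ of codimension $|S\setminus S_0|\ge 1$. The plan is then to induct on $|\mS|$: if some $\mQ_{S,h(S)}$ with $S\ne S_0$ is itself covered by the remaining subcubes, delete it and apply the inductive hypothesis; otherwise every $S\ne S_0$ is \emph{essential} in the covering of $2^{[n]}\setminus\mF(\mS,h)$, and one must exploit this essentiality together with the Sperner structure of the full $\mS$ to locate an uncovered vertex of $2^{[n]\setminus S_0}$. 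A parallel route would be induction on $n$: split $2^{[n]}$ along a coordinate $v\in[n]$ and analyse the three classes $\{S:v\in h(S)\}$, $\{S:v\in S\setminus h(S)\}$, $\{S:v\notin S\}$ separately, applying the inductive hypothesis inside each half.

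The principal obstacle is that the restricted family of traces $\{S\setminus S_0:S\in\mS\setminus\{S_0\}\}$ is in general \emph{not} Sperner: collapsing coordinates in $S_0$ can produce comparable or coincident traces, so the inductive hypothesis cannot be applied to the restricted instance naively. Likewise, a pure codimension or volume argument cannot suffice, since two complementary halves of a hypercube already cover it; any successful proof must genuinely exploit the incomparability of the \emph{full} defining sets $S\in\mS$, and not merely that of their traces in $[n]\setminus S_0$. In view of the fact that Conjecture~\ref{conj::equiv} is an equivalent formulation of the central open Conjecture~\ref{conj}, and that the authors themselves only resolve it up to $|\mS|\le 4$, I expect a complete proof to require a genuinely new structural insight, perhaps along the algebraic/Gr\"obner-basis lines alluded to elsewhere in the paper, in which the covering condition is encoded as a polynomial identity on $\{0,1\}^n$ whose failure can be detected via standard monomials.
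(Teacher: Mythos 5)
The statement you were asked to prove is presented in the paper as a \emph{conjecture}: Lemma~\ref{lem:Conj-equiv} shows it to be equivalent, for pairs $(\mS,h)$ coming from s-extremal families, to the open elimination conjecture (Conjecture~\ref{conj}), and the paper itself establishes it only for $|\mS|\le 4$ (Theorem~\ref{thm::conjecture_for_small_sperner}) and for $h=h_A$ (Theorem~\ref{thm::elimination_main}). So there is no complete proof to measure your attempt against, and your closing assessment --- that a genuinely new structural idea is needed --- is accurate. Your base cases are sound: $|\mS|=1$ is trivial, and $|\mS|=2$ is exactly Claim~\ref{claim:Q-sperner}. Your counting argument for $|\mS|=3$ can be completed, but not for the reason you give: the configuration you derive ($|S_i\setminus S_j|=1$ for all $i\ne j$) is \emph{not} structurally impossible --- Example~\ref{counterexmpl} realizes precisely that Sperner family. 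The actual contradiction is this: a covering $\mQ_i=(\mQ_i\cap\mQ_j)\cup(\mQ_i\cap\mQ_k)$ by two subcubes each of size at most $|\mQ_i|/2$ forces the two pieces to be disjoint, hence $\mQ_1\cap\mQ_2\cap\mQ_3=\emptyset$; but all three pairwise intersections being nonempty means $S_i\cap H_j=S_j\cap H_i$ for all pairs, which by criterion (ii) of Subsection~\ref{equiv equation} forces $\mQ_1\cap\mQ_2\cap\mQ_3=\mQ_{S_1\cup S_2\cup S_3,\,H_1\cup H_2\cup H_3}\ne\emptyset$. The paper handles $|\mS|\le 3$ differently, via the auxiliary graph $G_{\mS,h}$ (Claims~\ref{claim:degree1} and~\ref{claim:completegraph}), reducing the complete-graph case to $h=h_A$; your counting route is more elementary, but it does not obviously extend to the $C_4$ and $K_4^-$ configurations that the paper must treat separately at $|\mS|=4$, a case you do not attempt.

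Beyond $|\mS|\le 3$ your proposal is a programme rather than a proof, and the obstruction you name is the right one: after projecting onto $2^{[n]\setminus S_0}$ the traces $S\setminus S_0$ need not form a Sperner family, so induction on $|\mS|$ or on $n$ does not close. Note also that your heuristic of taking $S_0$ of maximum cardinality is demonstrably insufficient: in Example~\ref{counterexmpl} all three sets have the same size, yet $S_3$ satisfies $\mQ_3\subseteq\mQ_1\cup\mQ_2$ and is therefore a bad choice, while $S_1$ and $S_2$ are good ones. Any complete argument must, as you say, exploit the incomparability of the full sets in $\mS$ and identify a good $S_0$ by finer means; at present no such argument is known.
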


Note that we have already seen that any s-extremal family $\mF \subseteq 2^{[n]}$ is of the form $\mF (\mS, h)$ with $\text{Sh}(\mF) = \mH(\mS)$ for a unique Sperner family $\mS$ and function $h$. We will now show that Conjecture~\ref{conj} and Conjecture~\ref{conj::equiv} for s-extremal families are indeed equivalent.
\begin{lemma}\label{lem:Conj-equiv}
Let $\mF \subsetneq 2^{[n]}$ be s-extremal of the form $\mF (\mS, h)$ with $\text{Sh}(\mF) = \mH(\mS)$ for a Sperner family $\mathcal{S}\subseteq 2^{[n]}$ and a function $h:\mathcal{S}\rightarrow 2^{[n]}$ such that $h(S)\subseteq S$ for every $S\in \mathcal{S}$.. Then there exists $F \in 2^{[n]}\setminus \mF$ such that $\mF' = \mF \cup \{F\}$ is s-extremal if and only if there exists $S_0 \in \mS$ such that $\mQ_{S_0,h(S_0)} \not\subseteq \bigcup_{S \in \mathcal{S}\setminus\{S_0\}} \mQ_{S,h(S)}$.
\end{lemma}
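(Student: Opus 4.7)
The plan is to analyse $\text{Sh}(\mF')$ for $\mF' = \mF \cup \{F\}$ directly, exploiting the key fact recalled in the proof of Lemma~\ref{unique lemma}: since $\mF$ is s-extremal, for every minimal non-shattered set $S \in \mS$ the subset $h(S) \subseteq S$ is the \emph{unique} subset of $S$ not realised as $F' \cap S$ for any $F' \in \mF$. For a candidate $F \subseteq [n]$ I would introduce
\[
\mS_F := \{S \in \mS : F \cap S = h(S)\} = \{S \in \mS : F \in \mQ_{S,h(S)}\}.
\]
Each $S \in \mS_F$ becomes shattered by $\mF'$, since its previously unique missing subset $h(S)$ is now realised by $F$, while each $S \in \mS \setminus \mS_F$ stays non-shattered because $h(S)$ is still missing. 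Combined with $\text{Sh}(\mF') \supseteq \text{Sh}(\mF) = \mH(\mS)$ and the observation that $\mS_F \cap \mH(\mS) = \emptyset$ (every $S \in \mS$ lies in $\text{Up}(\mS)$), this gives the key lower bound $|\text{Sh}(\mF')| \geq |\mH(\mS)| + |\mS_F|$.

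For $(\Leftarrow)$, given $\mQ_{S_0,h(S_0)} \not\subseteq \bigcup_{S \neq S_0} \mQ_{S,h(S)}$, I would pick any $F$ in the set difference; then $F \notin \mF$ and $\mS_F = \{S_0\}$. I would then claim $\text{Sh}(\mF') = \mH(\mS) \cup \{S_0\}$, which gives $|\text{Sh}(\mF')| = |\mF|+1 = |\mF'|$ and hence s-extremality. The inclusion $\supseteq$ follows from the setup above; for the reverse inclusion I would take any $T \notin \mH(\mS) \cup \{S_0\}$, so that $T$ lies in $\text{Up}(\mS) \setminus \{S_0\}$, and split into two cases. If $T \supseteq S$ for some $S \in \mS \setminus \{S_0\}$, the still-missing subset $h(S)$ prevents $T$ from being shattered by $\mF'$. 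Otherwise $T \supsetneq S_0$, and $\mF$ already failed to realise any of the $2^{|T \setminus S_0|} \geq 2$ subsets $U \subseteq T$ with $U \cap S_0 = h(S_0)$, so even after adding the single set $F$ at least one such subset remains missing and $T$ stays unshattered.

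For $(\Rightarrow)$, assume $\mF' = \mF \cup \{F\}$ is s-extremal. Since $F \notin \mF = 2^{[n]} \setminus \bigcup_{S \in \mS}\mQ_{S,h(S)}$, the set $\mS_F$ is non-empty, and the lower bound above combined with $|\text{Sh}(\mF')| = |\mF'| = |\mH(\mS)|+1$ forces $|\mS_F| \leq 1$, hence $|\mS_F| = 1$. Denoting its unique element by $S_0$, the witness $F$ lies in $\mQ_{S_0,h(S_0)}$ and in no other $\mQ_{S,h(S)}$, which is exactly the required non-containment. The step I expect to be the main obstacle is the reverse inclusion in the $(\Leftarrow)$ direction, i.e.\ ruling out shattering of proper supersets of $S_0$; the argument rests on the simple but essential counting observation that any $T \supsetneq S_0$ admits at least two subsets meeting $S_0$ in $h(S_0)$, whereas adding a single set to $\mF$ can plug at most one gap.
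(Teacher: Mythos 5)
Your proof is correct. The forward direction is essentially the paper's argument: both identify the single newly shattered set $S_0$, use the uniqueness of the missing subset $h(S_0)$ (from the proof of Lemma~\ref{unique lemma}) to force $F \in \mQ_{S_0,h(S_0)}$, and use the fact that only one set can be added to $\text{Sh}(\mF)$ to exclude $F$ from the other cubes; your bookkeeping via $\mS_F$ and the bound $|\text{Sh}(\mF')| \geq |\mH(\mS)| + |\mS_F|$ is just a slightly more systematic packaging of this. The converse direction, however, is genuinely different. The paper passes to the modified Sperner family $\mS' = \mS\setminus\{S_0\}\cup\{S_1',\dots,S_k'\}$ with $\mH(\mS')=\mH(\mS)\cup\{S_0\}$, extends $h$ to the new sets by choosing between the two complementary subcubes $\mQ_{S_i',h(S_0)}$ and $\mQ_{S_i',h(S_0)\cup\{v_i\}}$ so as to avoid $F$, and then squeezes $|\mF(\mS',h)|$ between $|\mF|+1$ and $|\mH(\mS')|$ via the Sauer--Shelah lemma and Claim~\ref{fshsh}. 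You instead compute $\text{Sh}(\mF\cup\{F\})$ directly and show it equals $\mH(\mS)\cup\{S_0\}$, with the key observation that any $T\supsetneq S_0$ has at least $2^{|T\setminus S_0|}\geq 2$ subsets meeting $S_0$ in $h(S_0)$, all missing from $\mF|_T$, so adding a single set cannot shatter $T$. Your route is more elementary and avoids the $\mS'$ machinery entirely; what the paper's route buys in exchange is the explicit representation $\mF\cup\{F\}=\mF(\mS',h)$, which keeps the enlarged family inside the $\mF(\mS,h)$ framework that drives the rest of the paper's approach to the conjecture.
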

\begin{proof}
Assume first that there exists $F \in 2^{[n]}\setminus \mF$ such that $\mF' = \mF \cup \{ F\}$ is s-extremal. In particular, we have $|\mF|=|\text{Sh}(\mF)|<|\text{Sh}(\mF')|=|\mF'|=|\mF|+1$  and so by the monotonicity of the family of shattered sets there must exist a set $S_0\in \mS$ such that $\text{Sh}(\mF')=\text{Sh}(\mF)\cup\{S_0\}$. From the proof of Lemma~\ref{unique lemma} we know that $h(S_0)$ is the unique subset of $S_0$ that could not be obtained as an intersection of elements of $\mF$ with $S_0$. This implies that $S_0\in \text{Sh}(\mF')$ is only possible if $S_0\cap F=h(S_0)$, i.e $F\in \mQ_{S_0,h(S_0)}$. However, on the other hand we have $F\notin \bigcup_{S \in \mathcal{S}\setminus\{S_0\}} \mQ_{S,h(S)}$ implying $\mQ_{S_0,h(S_0)} \not\subseteq \bigcup_{S \in \mathcal{S}\setminus\{S_0\}} \mQ_{S,h(S)}$. Indeed if there were a set $S \in \mS\setminus \{S_0\}$ with $F \in \mQ_{S,h(S)}$, then $\mF'$ would shatter $S$ too, contradicting $|\text{Sh}(\mF')| = |\text{Sh}(\mF)|+1$.

\medskip

Conversely, suppose there exists $S_0 \in \mS$ such that $\mQ_{S_0,h(S_0)} \not\subseteq \bigcup_{S \in \mathcal{S}\setminus\{S_0\}} \mQ_{S,h(S)}$ and choose a set $F\in \mQ_{S_0,h(S_0)} \setminus \left(\bigcup_{S \in \mathcal{S}\setminus\{S_0\}} \mQ_{S,h(S)}\right)$. We claim that in this case our new approach described in the previous section is applicable with $S_0$ and we can extend the function $h$ so that it gives that $\mF \cup \{F\}$ is s-extremal. Indeed, let, as in the proof of Theorem \ref{thm::elimination_main}, $\{S_1',...,S_k' \} \subseteq \{ S_0 \cup \{v\}\; : \; v\in [n]\setminus S_0 \}$ be the unique family such that $\mathcal{S} ' = \mathcal{S} \setminus \{S_0\} \cup \{ S_1',...,S_k'\}$ is again a Sperner family and $\mathcal{H(\mathcal{S'})}=\mathcal{H}(\mathcal{S})\cup \{S_0 \}$. Now for $i\in [k]$ put $v_i=S_i'\setminus S_0$ and choose $H_i'=h(S_i')$ to be $h(S_0)$ or 
$h(S_0)\cup\{v_i\}$ so that we have $F\notin \mQ_{S_i',H_i'}$. Note that $\mQ_{S_i',h(S_0)}$ and $\mQ_{S_i',h(S_0)\cup\{v_i\}}$ are complementary subcubes of $\mQ_{S_0,h(S_0)}$ so exactly one of the above choices will be appropriate. Now consider the family $\mF(\mS',h)$ for which, just as in the proof of Theorem~\ref{thm::elimination_main}, we have that $\mF\subseteq \mF(\mS',h)$.  Moreover by the Sauer-Shelah lemma, Claim~\ref{fshsh} and the s-extremality of $\mF$ we also get $|\mF(\mS',h)|\leq |\text{Sh}(\mF(\mS',h))|\leq |\mH(\mS')|=|\mH(\mS)\cup\{S_0\}|=|\mH(\mS)|+1=|\text{Sh}(\mF)|+1=|\mF|+1$. However by the choice of $F$ and the $H_i'$s we have $F\notin \bigcup_{S\in \mS\setminus\{S_0\}}\mQ_{S,h(S)}$ and also $F\notin\bigcup_{i=1}^k\mQ_{S_i',h(S_i')}$. Hence $F\in \mF(\mS',h)\setminus \mF$ which is possible only if $\mF(\mS',h)=\mF\cup \{F\}$ and if there is equality everywhere in the earlier series of inequalities, in particular  $\mF(\mS',h)=\mF\cup \{F\}$ is s-extremal.
\end{proof}
\noindent The following Claim is useful when one works with the above version of the conjecture.
\begin{claim}\label{claim:Q-sperner}
Let $\mS \subseteq 2^{[n]}$ be a Sperner family and $h:\mathcal{S}\rightarrow 2^{[n]}$ a function such that $h(S)\subseteq S$ for every $S\in \mathcal{S}$. Then for every $S_1 \neq S_2\in\mS$ we have $\mQ_{S_1,h(S_1)} \not\subseteq \mQ_{S_2,h(S_2)}$.
\end{claim}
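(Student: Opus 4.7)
The plan is a direct contradiction argument that exploits the cube structure of the sets $\mQ_{S,h(S)}$ together with the Sperner property of $\mS$. Recall that $\mQ_{S,h(S)}$ consists of exactly those $F\subseteq[n]$ for which $F\cap S = h(S)$; equivalently, it is the affine subcube of $2^{[n]}$ in which the coordinates indexed by $S$ are fixed according to $h(S)$ while the coordinates in $[n]\setminus S$ are free.

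First I would assume, for contradiction, that $\mQ_{S_1,h(S_1)} \subseteq \mQ_{S_2,h(S_2)}$ for some distinct $S_1,S_2\in \mS$. The key step is to show this forces $S_2 \subseteq S_1$. To see this, consider sets of the form $F = h(S_1)\cup T$ with $T\subseteq [n]\setminus S_1$; every such $F$ lies in $\mQ_{S_1,h(S_1)}$ and hence, by assumption, must satisfy $F\cap S_2 = h(S_2)$. Splitting $F\cap S_2 = \bigl(h(S_1)\cap S_2\bigr)\cup\bigl(T\cap S_2\bigr)$, the second summand ranges freely over all subsets of $S_2\setminus S_1$ as $T$ varies, so the equality $F\cap S_2 = h(S_2)$ can hold for every admissible $T$ only if $S_2\setminus S_1 = \emptyset$, i.e.\ $S_2\subseteq S_1$.

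Finally, since $S_1 \neq S_2$ this containment is strict, contradicting the fact that $\mS$ is a Sperner family. I do not expect any serious obstacle here: the argument is essentially a single observation about varying the free coordinates of the subcube $\mQ_{S_1,h(S_1)}$, and the Sperner hypothesis then closes the case immediately.
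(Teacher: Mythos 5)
Your proof is correct and is essentially the paper's argument: both force $S_2\subseteq S_1$ by exploiting that the coordinates of $\mQ_{S_1,h(S_1)}$ outside $S_1$ are free, and then invoke the Sperner property. The paper simply instantiates your "vary $T$" step with the two extreme witnesses $F=h(S_1)$ and $F=h(S_1)\cup([n]\setminus S_1)$.
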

\begin{proof}
Suppose the claim is false, i.e. there are sets $S_1 \neq S_2\in\mS$ such that $\mQ_{S_1,h(S_1)} \subseteq \mQ_{S_2,h(S_2)}$. This in particular means that $h(S_1),h(S_1)\cup([n]\setminus S_1)\in \mQ_{S_1,h(S_1)}$ also belong to $\mQ_{S_2,h(S_2)}$. However this means that $h(S_1)\cap S_2= h(S_2)$ and $\big(h(S_1)\cup([n]\setminus S_1)\big)\cap S_2=h(S_2)$ which, as $h(S_1)$ and $[n]\setminus S_1$ are disjoint, is possible only if $S_2 \subseteq S_1$, contradicting the fact that  $\mS$ is a Sperner family.
\end{proof}
Given a Sperner family $\mS=\{S_1,\dots,S_N\}\subseteq 2^{[n]}$ and a function $h:\mathcal{S}\rightarrow 2^{[n]}$ such that $h(S)\subseteq S$ for every $S\in \mathcal{S}$ let us define an auxiliary graph $G_{\mS,h}$. For ease of notation for $i\in [N]$ put $h(S_i)=H_i$ and $\mQ_{S_i,H_i}=Q_i$.  Then the vertex set of $G_{\mS,h}$ is given by $V(G_{\mS,h}) = \{ (S_i,H_i)\; : i \in [N] \}$ and we join two vertices $(S_i,H_i)$ and $(S_j,H_j)$ if $S_i \cap H_j = S_j \cap H_i$. Note that by earlier remarks if there is an edge between $(S_i,H_i)$ and $(S_j,H_j)$, then the corresponding cubes intersect and are disjoint otherwise.

We will now state and prove some basic facts about $G_{\mS,h}$ related to Conjecture~\ref{conj::equiv}.

\begin{claim}\label{claim:degree1}
Let $\mS=\{S_1,\dots,S_N\} \subseteq 2^{[n]}$ be a Sperner family and $h:\mathcal{S}\rightarrow 2^{[n]}$ a function such that $H_i=h(S_i)\subseteq S_i$ for every $i\in[N]$. Then if there is a vertex of degree at most $1$ in $G_{\mS,h}$, then $(\mS,h)$ satisfies Conjecture~\ref{conj::equiv}.
\end{claim}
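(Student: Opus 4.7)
The plan is to exploit the fact, already noted in the paragraph introducing $G_{\mathcal{S},h}$, that edges of $G_{\mathcal{S},h}$ correspond \emph{exactly} to pairs of indices $(i,j)$ with $Q_i \cap Q_j \neq \emptyset$. Hence, if the vertex $(S_0,H_0)$ has small degree, then the cube $Q_0 = \mathcal{Q}_{S_0,H_0}$ is disjoint from almost all of the other cubes $Q_i$, and the union $\bigcup_{i\neq 0} Q_i$ intersects $Q_0$ in a very small set, which will make it easy to witness that $Q_0$ is not contained in this union.

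Concretely, I would split into two cases according to whether the degree of $(S_0,H_0)$ is $0$ or $1$. In the degree-$0$ case, $Q_0$ is disjoint from every $Q_i$ with $i \neq 0$, so $Q_0 \cap \bigcup_{i \neq 0} Q_i = \emptyset$; since $Q_0$ is itself non-empty (being a cube of dimension $n - |S_0| \geq 0$, in particular containing the set $H_0$), this immediately yields $Q_0 \not\subseteq \bigcup_{i \neq 0} Q_i$, which is exactly what Conjecture~\ref{conj::equiv} requires for the choice $S_0$.

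In the degree-$1$ case, let $(S_1,H_1)$ be the unique neighbour of $(S_0,H_0)$. Then by the same observation $Q_0 \cap Q_i = \emptyset$ for every $i \notin \{0,1\}$, so
\[
Q_0 \cap \bigcup_{i \neq 0} Q_i \;=\; Q_0 \cap Q_1.
\]
Hence $Q_0 \subseteq \bigcup_{i \neq 0} Q_i$ would force $Q_0 \subseteq Q_1$, which is ruled out by Claim~\ref{claim:Q-sperner}. So again $S_0$ is a valid choice for Conjecture~\ref{conj::equiv}.

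There is essentially no obstacle here: once one recalls that non-adjacency in $G_{\mathcal{S},h}$ encodes disjointness of the corresponding cubes and that distinct cubes of the form $\mathcal{Q}_{S,h(S)}$ are mutually incomparable by Claim~\ref{claim:Q-sperner}, both cases reduce to a one-line set-theoretic check. The only point to be slightly careful about is the degree-$0$ case, where one should remark that $Q_0 \neq \emptyset$ so that disjointness from everything else really does prevent containment in the (empty) intersection.
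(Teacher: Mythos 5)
Your proposal is correct and matches the paper's argument essentially verbatim: the degree-$0$ case uses disjointness of $\mQ_0$ from all other cubes (plus non-emptiness of $\mQ_0$), and the degree-$1$ case reduces containment to $\mQ_0\subseteq \mQ_1$, which Claim~\ref{claim:Q-sperner} excludes. No issues.
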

\begin{proof}
For ease of notation for $i\in [N]$ again put $Q_{S_i,H_i}=Q_i$.

In case there exists an isolated vertex, $(S_i,H_i)$ say, it is clear that $ \mQ_i \not\subseteq \bigcup_{j \in [N]\setminus \{i\}} \mQ_j$, since $\mQ_i$ is disjoint from all other $\mQ_j$.

Otherwise there exists a vertex of degree 1. After possible relabelling, we may assume $(S_1,H_1)$ has degree $1$, and that $(S_2,H_2)$ is its unique neighbour. However now $\mQ_1$ is disjoint from all $\mQ_j$, $j \geq 3$, so  $\mQ_1 \subseteq \bigcup_{j \geq 2} \mQ_j$ would be only possible if we would have $\mQ_1 \subseteq \mQ_2$ which is impossible by Claim~\ref{claim:Q-sperner}. Hence $(\mS,h)$ really satisfies Conjecture~\ref{conj::equiv}
\end{proof}
\begin{claim}\label{claim:completegraph}
Let $\mS=\{S_1,\dots,S_N\} \subseteq 2^{[n]}$ be a Sperner family and $h:\mathcal{S}\rightarrow 2^{[n]}$ a function such that $H_i=h(S_i)\subseteq S_i$ for every $i\in[N]$. Then if $G_{\mS,h}$ is the complete graph on $N$ vertices then $(\mS,h)$ satisfies Conjecture~\ref{conj::equiv}.
\end{claim}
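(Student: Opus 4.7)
The plan is to show that, in fact, \emph{every} $S_0\in\mS$ satisfies the conclusion of Conjecture~\ref{conj::equiv}, by exhibiting a concrete element of $\mQ_{S_0,h(S_0)}$ that avoids every other $\mQ_{S_j,h(S_j)}$. First, I would fix $S_0\in\mS$, set $U:=[n]\setminus S_0$, and identify $\mQ_{S_0,H_0}$ with $2^U$ via $F\leftrightarrow A:=F\setminus H_0$, so that every $F\in\mQ_{S_0,H_0}$ is uniquely of the form $F=H_0\cup A$ with $A\subseteq U$. For each $j\neq 0$ put $T_j:=S_j\setminus S_0$ and $\tau_j:=H_j\setminus S_0=H_j\cap T_j\subseteq T_j$. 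A short computation using the completeness identity $H_0\cap S_j=H_j\cap S_0$ shows that $F=H_0\cup A$ lies in $\mQ_{S_j,H_j}$ if and only if $A\cap T_j=\tau_j$. The Sperner hypothesis gives $T_j\neq\emptyset$ for every $j\neq 0$.

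The key structural input from completeness is a consistency statement. For any $i,j\neq 0$ and any $v\in T_i\cap T_j$, the edge condition $H_i\cap S_j=H_j\cap S_i$, combined with $v\in S_i\cap S_j\setminus S_0$, gives $v\in H_i\iff v\in H_j$, that is, $v\in\tau_i\iff v\in\tau_j$. Consequently the partial assignments $\{\tau_j\subseteq T_j\}_{j\neq 0}$ agree on every overlap and glue together to a single subset $\tau^*\subseteq T:=\bigcup_{j\neq 0}T_j\subseteq U$ satisfying $\tau^*\cap T_j=\tau_j$ for every $j\neq 0$.

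With $\tau^*$ in hand, I would take the explicit witness $A:=U\setminus\tau^*$, so $F:=H_0\cup(U\setminus\tau^*)\in\mQ_{S_0,H_0}$. Then for every $j\neq 0$,
$$ A\cap T_j = T_j\setminus\tau^* = T_j\setminus\tau_j, $$
and since $\tau_j$ and $T_j\setminus\tau_j$ form a partition of the \emph{nonempty} set $T_j$ they are distinct subsets of $T_j$. Hence $A\cap T_j\neq\tau_j$ for every $j\neq 0$, and we obtain $F\in\mQ_{S_0,H_0}\setminus\bigcup_{j\neq 0}\mQ_{S_j,H_j}$, which proves the claim.

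There is not really a hidden obstacle: once one views $\mQ_{S_0,H_0}$ as $2^U$ and reads the pairwise-intersection conditions as partial assignments on the sets $T_j$, completeness of $G_{\mS,h}$ is precisely the statement that these partial assignments are mutually compatible. After that the choice $A=U\setminus\tau^*$ writes itself, and the Sperner property supplies exactly the nonemptiness $T_j\neq\emptyset$ needed to separate $\tau_j$ from its complement inside $T_j$. Note that this argument also reproves the s-extremality of $\mF(\mS,h)$ under the completeness hypothesis, since the same inclusion-exclusion identity used in Proposition~\ref{fixedAlemma} is in force.
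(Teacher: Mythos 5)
Your proof is correct, but it takes a genuinely different route from the paper. The paper's argument is a reduction: from the completeness of $G_{\mS,h}$ it deduces that $h=h_A$ for $A=H_1\cup\dots\cup H_N$, and then invokes Theorem~\ref{thm::elimination_main} together with Lemma~\ref{lem:Conj-equiv} to conclude. You instead work entirely inside the cube $\mQ_{S_0,H_0}\cong 2^{U}$ and produce an explicit witness $F=H_0\cup(U\setminus\tau^*)$; the two inputs you need, namely that membership of $H_0\cup A$ in $\mQ_{S_j,H_j}$ is equivalent to the local condition $A\cap T_j=\tau_j$ and that the partial assignments $\tau_j$ are pairwise consistent, are exactly the edge conditions $H_0\cap S_j=H_j\cap S_0$ and $H_i\cap S_j=H_j\cap S_i$, so completeness is used in full, and the Sperner property enters only through $T_j\neq\emptyset$, which separates $\tau_j$ from $T_j\setminus\tau_j$. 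All of these steps check out. What each approach buys: the paper's proof is shorter given the machinery already in place and isolates the structural fact that a complete $G_{\mS,h}$ forces $h$ to be of the form $h_A$; yours is self-contained and elementary (no detour through s-extremality, Lemma~\ref{lem:Conj-equiv}, or Proposition~\ref{fixedAlemma}), it proves the stronger statement that \emph{every} $S_0\in\mS$ works, and it exhibits a concrete set $F$ rather than arguing by counting. Your closing remark that the construction reproves s-extremality via the inclusion-exclusion identity is also accurate, since completeness gives $\mathbb{I}_{i,j}=1$ for all pairs.
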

\begin{proof}
	Since $G_{\mS,h}$ is the complete graph on $N$ vertices, we have $S_i \cap H_j = S_i \cap H_j$ for all $1 \leq i < j \leq N$. We will show that in this case $h = h_A$, where $A = H_1 \cup ... \cup H_N \subseteq [n]$. Then by Theorem~\ref{prop::s-extremal} the family $\mF(\mS,h)$ satisfies Conjecture~\ref{conj} and so by Lemma~\ref{lem:Conj-equiv} we have that $(\mS,h)$ satisfies Conjecture~\ref{conj::equiv}. Recall that $h_A(S_i)=S_i \cap A$. To finish the proof, just note that $h_A(S_i)=S_i \cap A = S_i \cap (H_1\cup ...\cup H_N)= (S_i\cap H_1 ) \cup ...\cup (S_i \cup H_N) = H_i$, since $S_i\cap H_i = H_i$ and for every $j\neq i$  we have $S_i \cap H_j = S_j \cap H_i \subseteq H_i$.
\end{proof}
Now we are in a position to prove Theorem~\ref{thm::conjecture_for_small_sperner}.

\begin{proof}[Proof of Theorem~\ref{thm::conjecture_for_small_sperner}]
Let $\mF=\mF(\mS,h)$ as in the statement. By Lemma~\ref{lem:Conj-equiv} it is enough to prove that $(\mS,h)$ satisfies Conjecture~\ref{conj::equiv}. 

\medskip

If $|\mS| \leq 3$, then $G_{\mS,h}$ either has a vertex of degree at most one or is a clique and so by Claims~\ref{claim:degree1} and \ref{claim:completegraph} $(\mS,h)$ satisfies Conjecture~\ref{conj::equiv}. 	

\medskip

If $|\mS| = 4$, then the only cases not handled by Claims~\ref{claim:degree1} and \ref{claim:completegraph} are when $G_{\mS,h}$ is a $C_4$, i.e. a cycle of length four, or a $K_4^-$, i.e. a complete graph on four vertices minus an edge. As before, for $i\in[n]$ put $Q_i=Q_{S_i,H_i}$.

\medskip

Suppose first that $G_{\mS,h} = C_4$ with vertices labeled so that we have $\mQ_1 \cap \mQ_3 = \emptyset = \mQ_2 \cap \mQ_4$ and assume that $(\mS,h)$ does not satisfy Conjecture~\ref{conj::equiv}.  Then we have  $\mQ_1 , \mQ_3 \subseteq \mQ_2 \cup \mQ_4$ and $\mQ_2, \mQ_4 \subseteq \mQ_1 \cup \mQ_3$ and hence $\mQ_1 \cup \mQ_3 = \mQ_2 \cup \mQ_4$, where on both sides we have disjoint unions. For $1\leq i<j\leq 4$ put $\mQ_{i,j}=\mQ_i\cap\mQ_j$. Then $\mQ_1=\mQ_{1,2}\cup\mQ_{1,4}$, $\mQ_2=\mQ_{1,2}\cup\mQ_{2,3}$, $\mQ_3=\mQ_{2,3}\cup\mQ_{3,4}$ and $\mQ_4=\mQ_{1,4}\cup\mQ_{3,4}$ where in each case we have a disjoint union of nonempty cubes. Now this is possible only if all the $\mQ_i$'s are $d-1$ dimensional subcubes of some $d$ dimensional cube, in which case we necessarily have $S_1=S_3$ and $S_2=S_4$ - a contradiction.

\medskip

Now assume that $G_{\mS,h} = K_4^-$ with vertices labeled so that we have $\mQ_1 \cap \mQ_3 = \emptyset$ and assume that $(\mS,h)$ does not satisfy Conjecture~\ref{conj::equiv}. Then in particular $\mQ_1 \subseteq \mQ_2 \cup \mQ_4$. Let as before $\mQ_{1,2} = \mQ_1 \cap \mQ_2$ and $\mQ_{1,4} = \mQ_1 \cap \mQ_4$. Then both $\mQ_{1,2}$ and $\mQ_{1,4}$ are subcubes of $\mQ_1$ and $\mQ_2' \cup \mQ_4' = \mQ_1$. There are two cases to consider. 

Either we have $\mQ_{1,2} = \mQ_1$ or $\mQ_{1,4} = \mQ_1$, in which case we have $\mQ_1 \subseteq \mQ_2$ or $\mQ_1 \subseteq \mQ_4$, contradicting Claim~\ref{claim:Q-sperner}.

Otherwise $\mQ_{1,2}$ and $\mQ_{1,4}$ are two disjoint subcubes of $\mQ_1$ of one smaller dimension. In particular, there exists a direction $i\in [n]$ that distinguishes them. But the same direction is then distinguishing $\mQ_2$ and $\mQ_4$ implying $\mQ_2 \cap \mQ_4 = \emptyset$, which is a contradiction as the only disjoint pair of cubes were $\mQ_1$ and $\mQ_3$.
\end{proof}



\section{Gr\"obner bases and s-extremal families}\label{sec::preliminaries_groebner}

Let $\mathbb{F}$ be an arbitrary field and let $\mathbb{F}[x_1,...,x_n] = \mathbb{F}[\textbf{x}]$ be the polynomial ring over $\mathbb{F}$ with variables $x_1,...,x_n$. Given some set $F\subseteq [n]$, let $v_F \in \{0,1 \}^n$ be its \emph{characteristic vector}, i.e. the $i$-th coordinate of $v_F$ is $1$ if $i\in F$ and $0$ otherwise. Then we can identify a set system $\mathcal{F} \subseteq 2^{[n]}$ with the vector system 
\begin{equation*}
\mathcal{V}(\mathcal{F}) = \{ v_F\ :\ F\in \mathcal{F} \} \subseteq \{0,1 \}^n \subseteq \mathbb{F}^n,
\end{equation*}
and associate to $\mathcal{F}$ a polynomial ideal $I(\mathcal{F}) \unlhd \mathbb{F}[\textbf{x}]$, where 
\begin{equation*}
I(\mathcal{F})=I(\mathcal{V}(\mathcal{F}))= \{ f\in \mathbb{F}[\textbf{x}]\ :\ f(v_F)=0 \; \forall\; F \in \mathcal{F} \},
\end{equation*}
i.e. $I(\mathcal{F})$ is the vanishing ideal of the set of characteristic vectors of the elements of $\mathcal{F}$. Note that we always have $\{x_i^2 -x_i\ :\ i\in [n] \} \subseteq I(\mathcal{F})$. For more details about vanishing ideals of finite point sets see e.g. \cite{RM11}.

If one works with polynomial ideals, it is useful to have a nice ideal basis. Such nice bases are given by the so-called \emph{Gr\"obner bases}, which we will now briefly define. For more details the interested reader may consult e.g. \cite{AL94}. A total order $\prec$ on the monomials in $\mathbb{F}[\textbf{x}]$ is a \emph{term order} if $1$ is the minimal element of $\prec$ and $\prec$ is compatible with multiplication with monomials. One well-known and important term order is the \emph{lexicographic (lex) order}. Here one has $x_1^{w_1}...x_n^{w_n} \prec_{\text{lex}} x_1^{u_1}...x_n^{u_n}$ if and only if for the smallest index $k$ with $w_k \neq u_k$ one has $w_k < u_k$. One can build a lex order based on other orderings of the variables as well, so altogether we have $n!$ different lex orders. Given some term order $\prec$ and a nonzero polynomial $f \in \mathbb{F}[\textbf{x}]$, the \emph{leading monomial} $\text{lm}(f)$ of $f$, is the largest monomial (with respect to $\prec$) appearing with non-zero coefficient in the canonical form of $f$.

Now let $I \unlhd \mathbb{F}[\textbf{x}]$ be an ideal and $\prec$ a term order. A finite subset $\mathbb{G} \subseteq I$ is called a \emph{Gr\"obner basis of I} with respect to $\prec$ if for every nonzero polynomial $f\in I$ there exists a $g\in \mathbb{G}$ such that $\text{lm}(g)$ divides $\text{lm}(f)$. Gr\"obner bases have a lot of nice properties, among others we know that every non-zero ideal $I\unlhd \mathbb{F}[\textbf{x}]$ has a Gr\"obner basis for every term order, and if $\mathbb{G}$ is a Gr\"obner basis of $I$ for some term order, then $\mathbb{G}$ generates $I$ as an ideal as well, i.e. $I=\langle\mathbb{G}\rangle$.

For a subset $H \subseteq [n]$, set $\textbf{x}_H = \prod_{i \in H}x_i$, and given a pair of sets $H \subseteq S \subseteq [n]$ we then define the polynomial
\begin{equation*}
f_{S,H}(\textbf{x}) = \textbf{x}_H \cdot \prod_{i \in S\setminus H}(x_i -1).
\end{equation*}

A nice property of these polynomials is that for a set $F\subseteq [n]$ we have $f_{S,H}(v_F) \neq 0$ if and only if $F\cap S = H$.

For a Sperner family $\mathcal{S}$ and a function $h:\mathcal{S}\rightarrow 2^{[n]}$ such that $h(S)\subseteq S$ for every $S\in \mathcal{S}$ put
\begin{equation*}
\mathbb{G}(\mathcal{S},h) = \{f_{S,h(S)}\ :\ S\in \mathcal{S} \} \cup \{x_i^2-x_i\ :\ i\in [n] \}.
\end{equation*}

Now we are in a position to state the connection between s-extremal families and the theory of Gr\"obner bases.  
\begin{theorem}[\cite{RM11}]\label{ExtremalGroebner}
$\mathcal{F} \subseteq 2^{[n]}$ is s-extremal if and only if there are polynomials of the form $f_{S,H}$, which together with $\{x_i^2-x_i\; : \; i \in [n] \}$ form a Gr\"obner basis of $I(\mathcal{F})$ for all term orders. 
\end{theorem}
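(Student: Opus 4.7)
My plan rests on a single observation that makes the quantifier ``for all term orders'' essentially free: for every $H\subseteq S\subseteq [n]$ and every term order $\prec$, one has $\mathrm{lm}(f_{S,H})=\textbf{x}_S$, because each factor $(x_i-1)$ contributes leading term $x_i$ regardless of $\prec$. Consequently, for any Sperner family $\mathcal{S}$ and any admissible $h$, the ideal of leading monomials of $\mathbb{G}(\mathcal{S},h)$ is the term-order-independent monomial ideal $\langle \textbf{x}_S:S\in\mathcal{S}\rangle +\langle x_i^2 : i\in[n]\rangle$, whose complement in the set of all monomials is precisely $\{\textbf{x}_T:T\in\mathcal{H}(\mathcal{S})\}$, of cardinality $|\mathcal{H}(\mathcal{S})|$.

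For the $(\Rightarrow)$ direction I start from an s-extremal $\mathcal{F}$ and invoke Lemma~\ref{unique lemma} to obtain the unique Sperner family $\mathcal{S}$ and function $h$ with $\mathcal{F}=\mathcal{F}(\mathcal{S},h)$ and $\mathrm{Sh}(\mathcal{F})=\mathcal{H}(\mathcal{S})$. The natural candidate basis is $\mathbb{G}(\mathcal{S},h)$. First I check $\mathbb{G}(\mathcal{S},h)\subseteq I(\mathcal{F})$: the polynomials $x_i^2-x_i$ vanish on every $\{0,1\}$-vector, and for $F\in\mathcal{F}$ one has $F\cap S\neq h(S)$, so $f_{S,h(S)}(v_F)=0$. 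Using the observation above, the number of monomials outside $\langle \mathrm{lm}(\mathbb{G}(\mathcal{S},h))\rangle$ equals $|\mathcal{H}(\mathcal{S})|=|\mathrm{Sh}(\mathcal{F})|=|\mathcal{F}|=\dim_{\mathbb{F}}\mathbb{F}[\textbf{x}]/I(\mathcal{F})$. Since $\mathbb{G}(\mathcal{S},h)\subseteq I(\mathcal{F})$ gives $\langle \mathrm{lm}(\mathbb{G}(\mathcal{S},h))\rangle\subseteq \langle \mathrm{lm}(I(\mathcal{F}))\rangle$ with respect to any fixed $\prec$, the standard monomials of $I(\mathcal{F})$ (which form a basis of $\mathbb{F}[\textbf{x}]/I(\mathcal{F})$) sit inside the monomial complement of $\langle\mathrm{lm}(\mathbb{G}(\mathcal{S},h))\rangle$, and equal cardinality forces equality. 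Hence $\langle\mathrm{lm}(\mathbb{G}(\mathcal{S},h))\rangle=\langle\mathrm{lm}(I(\mathcal{F}))\rangle$, so $\mathbb{G}(\mathcal{S},h)$ is a Gr\"obner basis of $I(\mathcal{F})$, and since nothing depended on $\prec$, this holds for every term order.

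Conversely, assume polynomials of the form $f_{S,H}$ together with $\{x_i^2-x_i\}$ constitute a Gr\"obner basis of $I(\mathcal{F})$. By discarding any $f_{S',H'}$ whose leading monomial $\textbf{x}_{S'}$ is properly divisible by some $\textbf{x}_S$ already present (this leaves the initial ideal, hence the Gr\"obner basis property, unchanged), I reduce to a basis of the form $\mathbb{G}(\mathcal{S},h)$ for a Sperner family $\mathcal{S}$. The containment $\mathbb{G}(\mathcal{S},h)\subseteq I(\mathcal{F})$ yields $F\cap S\neq h(S)$ for every $F\in\mathcal{F}$ and every $S\in\mathcal{S}$, so $\mathcal{F}\subseteq\mathcal{F}(\mathcal{S},h)$. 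The Gr\"obner basis property together with the observation gives $|\mathcal{F}|=\dim\mathbb{F}[\textbf{x}]/I(\mathcal{F})=|\mathcal{H}(\mathcal{S})|$. Since shattering is monotone in $\mathcal{F}$, Claim~\ref{fshsh} implies $\mathrm{Sh}(\mathcal{F})\subseteq\mathrm{Sh}(\mathcal{F}(\mathcal{S},h))\subseteq\mathcal{H}(\mathcal{S})$, and combining with the Sauer-Shelah lemma (Proposition~\ref{S-Sh-l}) yields $|\mathcal{F}|\leq |\mathrm{Sh}(\mathcal{F})|\leq |\mathcal{H}(\mathcal{S})|=|\mathcal{F}|$, so $\mathcal{F}$ is s-extremal. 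The only delicate point in the whole plan is the dimension-counting step, namely that $\mathbb{G}\subseteq I$ combined with $|\{\text{monomials outside }\langle\mathrm{lm}(\mathbb{G})\rangle\}|=\dim\mathbb{F}[\textbf{x}]/I$ forces $\mathbb{G}$ to be a Gr\"obner basis of $I$. This is standard commutative algebra, but it is precisely the ingredient that turns the combinatorial condition of s-extremality into the surprisingly strong conclusion that one set of polynomials serves as a Gr\"obner basis uniformly across all term orders.
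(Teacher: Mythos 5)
Your proof is essentially correct. Note first that the paper does not prove Theorem~\ref{ExtremalGroebner} at all --- it is quoted from \cite{RM11} --- so there is no internal proof to match; judged on its own, your argument is a valid, self-contained derivation. The engine is exactly the right one: $\mathrm{lm}(f_{S,H})=\textbf{x}_S$ for \emph{every} term order (each monomial of $f_{S,H}$ divides $\textbf{x}_S$ and there is no cancellation), so the initial ideal of $\mathbb{G}(\mS,h)$ is term-order independent with exactly $|\mH(\mS)|$ standard monomials, and the comparison with $\dim_{\mathbb{F}}\mathbb{F}[\bx]/I(\mF)=|\mF|$ closes both directions. This is the same circle of ideas as \cite{ARS02,RM11}, but you replace their finer statement (that the standard monomials of $I(\mF)$ correspond bijectively to $\text{Sh}(\mF)$ for every term order) by a pure cardinality argument, which is a genuine simplification. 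Two small points you should tighten. First, in the forward direction you invoke Lemma~\ref{unique lemma}, which in the paper rests on an external uniqueness result from \cite{M10}; you do not actually need it --- taking $\mS$ to be the minimal non-shattered sets (so that $\mH(\mS)=\text{Sh}(\mF)$ automatically) and letting $h(S)$ be \emph{any} intersection pattern missing from $\mF|_S$ (one exists since $S$ is not shattered) already gives $\mathbb{G}(\mS,h)\subseteq I(\mF)$ and the count $|\mH(\mS)|=|\text{Sh}(\mF)|=|\mF|$, so the argument stands on existence alone. Second, in the pruning step of the converse you should also discard duplicates with $S'=S$ but $H'\neq H$ (equal, not properly divisible, leading monomials), so that $h$ is well defined as a function; this is handled by the same one-line remark that removing a generator whose leading monomial is divisible by a retained one preserves the Gr\"obner property.
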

We remark that from the proof of Theorem~\ref{ExtremalGroebner} from \cite{RM11} it also follows that if there is a suitable Gr\"obner basis for one particular term order then $\mathcal{F}$ is already s-extremal. Also if $\mF$ is s-extremal then $\mathbb{G}(\mathcal{S},h)$ is such a Gröbner basis for the Sperner family $\mS$ and function $h$ guaranteed by Lemma~\ref{unique lemma}.

Using  the approach given by Proposition~\ref{prop::s-extremal}, we can now state and prove our main result of this section. The proof requires a basic knowledge of commutative algebra, in particular ideal theory, for more details we refer the reader to the classical book of Atiyah and MacDondald~\cite{atiyahmacdonald1994}.

\begin{theorem}\label{thm::Grobner_equation}                 
Let $\mathcal{S}$ be a Sperner family and $h:\mathcal{S}\rightarrow 2^{[n]}$ a function  such that $h(S)\subseteq S$ for every $S\in \mathcal{S}$. Then $\mathbb{G}=\mathbb{G}(\mathcal{S},h)$ is a Gr\"obner basis (of $\langle\mathbb{G}\rangle$) for some term order if and only if 
\begin{equation*}
|\mathcal{H}(\mathcal{S})|=|\mathcal{F}(\mathcal{S},h)|.
\end{equation*}
\end{theorem}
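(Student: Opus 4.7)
The plan is to compute $\dim_{\mathbb{F}}\!\left(\mathbb{F}[\mathbf{x}]/\langle \mathbb{G}\rangle\right)$ in two different ways and compare them. As a first step I would identify the leading monomials of the generators of $\mathbb{G}$. Expanding
\[
f_{S,h(S)}(\mathbf{x}) = \sum_{h(S)\subseteq U\subseteq S}(-1)^{|S\setminus U|}\mathbf{x}_U,
\]
and using the fact that in any term order $\prec$ one has $\mathbf{x}_U\prec \mathbf{x}_V$ whenever $U\subsetneq V$ (a consequence of $1$ being minimal and of compatibility with multiplication), the leading monomial of $f_{S,h(S)}$ is $\mathbf{x}_S$, independently of the chosen term order. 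Hence the set of leading monomials of $\mathbb{G}$ is $\{\mathbf{x}_S : S\in\mathcal{S}\}\cup\{x_i^2:i\in[n]\}$, and the corresponding standard monomials are precisely the squarefree $\mathbf{x}_U$ with $S\not\subseteq U$ for every $S\in\mathcal{S}$, i.e., $\{\mathbf{x}_U : U\in\mathcal{H}(\mathcal{S})\}$. This set has cardinality $|\mathcal{H}(\mathcal{S})|$ and, again, does not depend on the term order.

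Secondly, I would compute $\dim_{\mathbb{F}}\!\left(\mathbb{F}[\mathbf{x}]/\langle\mathbb{G}\rangle\right)$ directly. The key property that $f_{S,H}(v_F)\neq 0$ iff $F\cap S=H$, together with the relations $x_i^2-x_i$ which force $V(\langle\mathbb{G}\rangle)\subseteq\{0,1\}^n$, yields $V(\langle\mathbb{G}\rangle)=\mathcal{V}(\mathcal{F}(\mathcal{S},h))$. To translate the size of the variety into the dimension of the quotient, I would use the standard fact that $\mathbb{F}[\mathbf{x}]/\langle x_i^2-x_i : i\in[n]\rangle\cong \mathbb{F}^{\{0,1\}^n}$, a product of fields. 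Since such a ring is semisimple, every ideal of $\mathbb{F}[\mathbf{x}]$ containing $\{x_i^2-x_i\}$ is determined by its zero set in $\{0,1\}^n$; in particular $\langle\mathbb{G}\rangle=I(\mathcal{F}(\mathcal{S},h))$, giving $\dim_{\mathbb{F}}\!\left(\mathbb{F}[\mathbf{x}]/\langle\mathbb{G}\rangle\right)=|\mathcal{F}(\mathcal{S},h)|$.

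Combining the two calculations finishes the proof: since standard monomials always span the quotient, they form a basis of $\mathbb{F}[\mathbf{x}]/\langle\mathbb{G}\rangle$ --- equivalently, $\mathbb{G}$ is a Gr\"obner basis for the chosen term order --- if and only if $|\mathcal{H}(\mathcal{S})|=\dim_{\mathbb{F}}\!\left(\mathbb{F}[\mathbf{x}]/\langle\mathbb{G}\rangle\right)=|\mathcal{F}(\mathcal{S},h)|$. The distinction between ``some term order'' and ``every term order'' is moot here, since both the set of standard monomials and the dimension of the quotient are independent of the term order. The main subtlety lies in the second step: justifying that having $|\mathcal{F}(\mathcal{S},h)|$ points in the variety really does translate into dimension $|\mathcal{F}(\mathcal{S},h)|$ for the coordinate ring is exactly where the semisimple structure of $\mathbb{F}[\mathbf{x}]/\langle x_i^2-x_i : i\in[n]\rangle$ does the real work. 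A more concrete alternative, avoiding commutative algebra, would be to explicitly construct the interpolating polynomials $\prod_{i\in F}x_i\prod_{i\notin F}(1-x_i)$ for $F\in\mathcal{F}(\mathcal{S},h)$ and verify directly that their classes modulo $\langle\mathbb{G}\rangle$ yield a basis.
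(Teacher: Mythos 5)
Your proposal is correct, and it takes a genuinely different route from the paper. The paper's proof of the forward direction first shows (exactly as you do) that $\langle\mathbb{G}\rangle$ contains $\langle x_i^2-x_i\rangle$ and is therefore radical, hence equals $I(\mathcal{F}(\mathcal{S},h))$; but it then concludes via the external characterisation of Theorem~\ref{ExtremalGroebner} together with Lemma~\ref{unique lemma} that $\mathcal{F}(\mathcal{S},h)$ is s-extremal with $\text{Sh}(\mathcal{F}(\mathcal{S},h))=\mathcal{H}(\mathcal{S})$, and the backward direction likewise passes through Proposition~\ref{prop::s-extremal} and Theorem~\ref{ExtremalGroebner}. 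You instead give a direct dimension count: the leading monomial of $f_{S,h(S)}$ is $\mathbf{x}_S$ for \emph{every} term order (since $\mathbf{x}_U\prec\mathbf{x}_V$ for $U\subsetneq V$), so the monomials outside the ideal generated by $\{\text{lm}(g):g\in\mathbb{G}\}$ are exactly $\{\mathbf{x}_U:U\in\mathcal{H}(\mathcal{S})\}$; these always span the quotient, and they are a basis precisely when $\mathbb{G}$ is a Gr\"obner basis, which happens iff $|\mathcal{H}(\mathcal{S})|=\dim_{\mathbb{F}}\mathbb{F}[\mathbf{x}]/\langle\mathbb{G}\rangle=|\mathcal{F}(\mathcal{S},h)|$. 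This buys you a self-contained argument that treats both directions symmetrically, avoids Theorem~\ref{ExtremalGroebner}, Proposition~\ref{prop::s-extremal} and Lemma~\ref{unique lemma} entirely, and makes transparent why ``some term order'' and ``every term order'' coincide here (a fact the paper only gets by appeal to the remark after Theorem~\ref{ExtremalGroebner}); the paper's route, in exchange, keeps the theorem embedded in its s-extremality machinery. Two small points of hygiene: you call the complement of the generators' leading monomials the ``standard monomials,'' which is only correct once you know $\mathbb{G}$ is a Gr\"obner basis --- the clean statement is that this set always \emph{contains} the standard monomials of $\langle\mathbb{G}\rangle$ and spans the quotient, so equality of cardinalities with $\dim_{\mathbb{F}}\mathbb{F}[\mathbf{x}]/\langle\mathbb{G}\rangle$ forces it to equal the standard monomials, which in turn is equivalent to $\mathbb{G}$ being a Gr\"obner basis; and the identity $\dim_{\mathbb{F}}\mathbb{F}[\mathbf{x}]/I(\mathcal{V})=|\mathcal{V}|$ for a finite point set $\mathcal{V}$ deserves the one-line interpolation argument you sketch at the end. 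Neither is a gap.
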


\begin{proof}
Suppose first that $\mathbb{G}$ is a Gr\"obner basis for some term order $\prec$. We start by showing that the ideal generated by $\mathbb{G}$ is a radical ideal, i.e. $\langle \mathbb{G} \rangle = \sqrt{\langle \mathbb{G} \rangle}$. To see this, first note that clearly $J = \langle x_i^2 - x_i\; : i \in [n] \rangle \subseteq \langle \mathbb{G} \rangle$. Now a basic fact from Algebra states that $\langle \mathbb{G} \rangle$ is a radical ideal in $\mathbb{F}[\bx]$ if and only if $\langle \mathbb{G} \rangle / J$ is a radical ideal in $\mathbb{F}[\bx] / J$. However, $\mathbb{F}[\bx]/J $ is isomorphic to $\mathbb{F}^{2^n}$, because both are isomorphic to the ring of all functions from $\{0,1 \}^n$ to $\mathbb{F}$. Using the fact that the only ideals of a field are the zero ideal and the field itself, and that the only ideals in a finite cartesian product of rings are products of ideals, one easily verifies that every ideal in $\mathbb{F}^{2^n}$ is the intersection of maximal ideals. This in turn implies that in $\mathbb{F}[\bx]/J$ every ideal is a radical ideal and so in particular $\langle \mathbb{G} \rangle /J $ is. Hence $\langle \mathbb{G} \rangle$ is a radical ideal and thus, since $J \subseteq \langle \mathbb{G} \rangle$, is a vanishing ideal of some finite set in $\{ 0,1\}^n$, which can be clearly understood as the set of characteristic vectors of some family $\mF\subseteq 2^{[n]}$. However, by the earlier mentioned properties of the $f_{S,h(S)}$ polynomials, $\mF$ is then  precisely $\mF(\mS,h)$:

\begin{equation*}
\mathcal{F} =\bigcap_{S\in \mathcal{S}}\{F\ :\ v_F\text{ is a root of }f_{S,h(S)}\}=\bigcap_{S\in \mathcal{S}}\{v_F\ :\ F\cap S\neq h(S)\}
\end{equation*}
\begin{equation*}
=\{ v_F\ :\ F\cap S \neq h(S) \ \forall S\in \mathcal{S} \} = 2^{[n]}\setminus \bigcup_{S\in \mathcal{S}} \mathcal{Q}_{S,h(S)} = \mathcal{F}(\mathcal{S},h).
\end{equation*}
Thus $\langle\mathbb{G}\rangle = I(\mathcal{F}(\mathcal{S},h))$ and so, by Theorem~\ref{ExtremalGroebner}, $\mathcal{F}(\mathcal{S},h)$ is s-extremal. However, according to the proof of Lemma~\ref{unique lemma}, in this case $\mS$ is the collection of all minimal sets not shattered by $\mF(\mS,h)$ and so we have $\text{Sh}(\mathcal{F}(\mathcal{S},h))=\mathcal{H}(\mathcal{S})$, implying $|\mathcal{F}(\mathcal{S},h)|=|\text{Sh}(\mF(\mS,h))|=|\mathcal{H}(\mathcal{S})|$.

\medskip

Now suppose $|\mathcal{H}(\mathcal{S})|=|\mathcal{F}(\mathcal{S},h)|$. In terms of Theorem~\ref{ExtremalGroebner} it is enough to show that $\mathcal{F}(\mathcal{S},h)$ is s-extremal, however this just follows from Proposition~\ref{prop::s-extremal}.
\end{proof}


\section{Concluding remarks}

In this paper we proved the elimination conjecture for some special cases and presented a new approach that we hope should work to prove the conjecture in full generality.

As already noted, given an s-extremal family $\mF\subseteq 2^{[n]}$ with its unique Sperner family $\mS$ and function $h$ one of the main problems in our new approach is to identify a suitable set $S_0\in \mS$ to start with and to extend the function $h$ to the new sets in the Sperner family. In this direction it would be nice to solve the following problem.
\begin{problem}
For a given Sperner family $\mS \subseteq 2^{[n]}$ determine all possible functions $h:\mS\rightarrow 2^{[n]}$ with $h(S)\subseteq S$ for every $S\in \mS$ such that the resulting set system $\mF(\mS,h)$ is s-extremal with $\text{Sh}(\mF(\mS,h))=\mH(\mS)$.
\end{problem}
To end with, we would like to remark in connection with Conjecture~\ref{conj::equiv} that it covers a much more general case then necessarily needed for our purposes, as in relation with Conjecture~\ref{conj} we are only interested in $(\mS,h)$ pairs that come from an s-extremal family $\mF$. So even if Conjecture~\ref{conj::equiv} fails to be true in its complete generality, it might remain valid in the special case of $(\mS,h)$ pairs coming from s-extremal families.


\bibliography{bib}
\bibliographystyle{plain}

\end{document}